\definecolor{halfgray}
{gray}{0.55}
\definecolor{webgreen}
{rgb}{0,0.4,0}
\definecolor{webbrown}
{rgb}{.8,0.1,0.1}
\definecolor{red}
{rgb}{1,0,0}
\newcommand \R {{ \mathbb R}}
\newcommand \N {{ \mathbb N}}
\newcommand \T {{ \mathbb T}}
\newcommand \Proj{{\mathbb P}}
\newcommand{\SL}{%
\operatorname{SL}
}
\newtheorem{theorem}{Theorem}[section]
\newtheorem {lemma} [theorem]{Lemma}
\newtheorem{corollary}[theorem]{Corollary}
\newtheorem{conjecture}[theorem]{Conjecture}
\title[Limits of Geodesic Push-Forwards of Horocycle Invariant Measures ]%
{ Limits of Geodesic Push-Forwards  \\ of Horocycle Invariant Measures}
  \author{Giovanni Forni}
\address{Department  of Mathematics\\
  University of Maryland \\
  College Park, MD USA}
\email
    {gforni@math.umd.edu}
\keywords
      {Teichm\"uller horocycle flow, Pointwise Equidistribution, Birkhoff  Ergodic Theorem, Oseledets 
      Multiplicative Ergodic Theorem}
\subjclass[2010]
        {37A30, 30F60, 22F30, 57M60.}
\date{\today}
\begin{document}

\def\echo#1{\relax}
    
\begin{abstract}
 \begin{sloppypar}
 We prove several general conditional convergence results on ergodic averages  for horocycle and geodesic subgroups
 of any continuous $\SL(2, \R)$-action on a locally compact space. These results are motivated
 by theorems of Eskin, Mirzakhani and Mohammadi on the $\SL(2, \R)$-action on the moduli space of Abelian
 differentials. By our argument we can derive from these theorems an improved version of the ``weak convergence''
 of push-forwards of horocycle measures under the geodesic flow and a  short proof of weaker versions of theorems of Chaika and
 Eskin \cite{CE} on Birkhoff genericity and Oseledets regularity in almost all directions for the Teichm\"uller geodesic flow.
 
  \end{sloppypar}
\end{abstract}
\maketitle

\section{Introduction}
It has been conjectured that push-forwards, under the forward Teichm\"uller  geodesic flow, of ergodic probability  measures
 for the unstable Teichm\"uller  horocycle flow, and similarly of measures uniformly distributed on unstable horocycle arcs 
  or on arcs of the circle action, converge (to an $\SL(2, \R)$-invariant measure).  To the best of our knowledge, W. Veech 
  was the first to ask this question, after his work \cite{V98} on Siegel  measures (now called Siegel--Veech measures).
  
M.~Bainbridge, J.~Smillie and B.~Weiss have proved  this conjecture for certain invariant orbifolds in the stratum $H(1,1)$ of Abelian differentials with two simple zeros on genus $2$ surfaces (see \cite{BSW}, Theorems  1.5 and 12.7).   
  
  The main purpose of this note is to prove that {\it up to removing a set of times of zero upper density}  the general conjecture is in fact a corollary of  results of A.~Eskin, M.~Mirzakhani and A.~Mohammadi \cite{EM}, \cite{EMM}. 
  
 Our argument is base on the idea of lifting family of (probability) measures on a compact space to measures on the space of probability measures, then derive restrictions from the well-known extremal property of ergodic probability measures with respect to the subset of all invariant measures.  

The same argument applies to limits of push-forwards under the Teichm\"uller geodesic flow of the Lebesgue measure on 
Teichm\"uller horocycle orbit segments or on arcs of circle orbits. In particular, our conclusion that push forwards for circle
orbits  converge to an $\SL(2, \R)$-invariant measure after removing a set of times of zero upper density implies,
by the work of A.~Eskin and H.~Masur \cite{EMa},  a correspondingly improved version of the asymptotic for the counting function 
derived in \cite{EMM}, \cite{EM} (see \cite{EM}, Theorem 1.7). 

The first section (\S~\ref{sec:LGP}) of this note is devoted to the proof of the above mentioned results for limit of push forwards of horocycle measures, horocycle and circle arcs.

\smallskip

A similar argument gives a short proof of a weak version of the theorem of J. ~Chaika and A.~Eskin, according to which, for all points
in the moduli space of Abelian differentials, almost all directions are {\it Birkhoff generic} for the Teichm\"uller geodesic flow with respect
to the unique absolutely continuous probability affined measure on the orbifold $\overline{\SL(2, \R) x}$
(see~\cite{CE}, Theorem 1.1).  We are unable to give a proof of the theorem of Chaika and Eskin. In our version, we prove convergence
of ergodic averages outside a subset of times of zero lower density. The second section (\S~\ref{sec:BG}) of this note is devoted to the proof of our partial result on {\it Birkhoff genericity} in almost all directions for general actions of $\SL(2,\R)$.  The third section (\S~\ref{sec:OR}) contains a similar approach to {\it Oseledets regularity} in almost all directions for uniformly Lipschitz irreducible cocycles over $\SL(2,\R)$ actions. Finally, in the last section 
(\S~\ref{sec:horospheres}) we derive 
an equidistribution result for the push-forwards of an arbitrary horospherical leaf under the Teichm\"uller geodesic flow.

\smallskip

In fact, our results are in principle not limited to the action of $\SL(2,\R)$ on the moduli space of Abelian differentials
and hold more generally for general continuous actions on locally compact topological spaces. 

For this reason, we present
below abstract results, which can then be applied to the action on moduli spaces thanks to the celebrated theorems of A.~Eskin, M.~Mirzakhani 
and A.~Mohammadi \cite{EM}, \cite{EMM}.

\medskip  Let
$$
g_t := \begin{pmatrix}e^t &  0 \\ 0 & e^{-t}  \end{pmatrix}\, \quad    h_t := \begin{pmatrix}1 &  t \\ 0 & 1  \end{pmatrix}
\quad \text{ and } \quad r_\theta =  \begin{pmatrix}\cos\theta &  \sin\theta \\ -\sin\theta & \cos \theta \end{pmatrix}
$$
denote the diagonal subgroup (the geodesic flow), the unstable unipotent flow (the unstable horocycle flow)
and the maximal torus (circle) of $\SL(2,\R)$. For all $t, s\in \R$ we have the commutation relation
$$
g_t \circ h_s =     h_{s e^{2t}} \circ  g_t \,.
$$

We will consider below an arbitrary  continuous (left) action of $\SL(2,\R)$ on a locally compact space $X$.
Let $\nu$ be any of the following type of Borel probability measures on $X$:

\begin{enumerate}
\item a horocycle probability invariant measure, that is, a Borel probability measure invariant under the action of the unipotent 
subgroup $h_\R$ on $X$;
\item a (normalized) horocycle arc, that is, a measure of the form
$$
\frac{1}{S} \int_0^S   (h_s)_* (\delta_x) ds \,, \quad    \text{ for some } (x,S) \in X \times \R^+;
$$
\item a (normalized) circle arc, that is, a measure of the form
$$
 \frac{1}{\Theta} \int_0^\Theta  ( r_\theta)_* (\delta_x) d\theta   \,, \quad    \text{ for some } (x,\Theta) \in X \times \R^+.
$$
\end{enumerate}
Our first result, on push-forwards of horocycle invariant measures, and of horocycle and circle arcs,  can be stated as follows:

\begin{theorem} \label{thm:HP} Let $\nu$ be any Borel probability measure in the above list. If the weak* limit
$$
\mu:=\lim_{T\to +\infty}  \frac{1}{T}\int_0^T   (g_t)_* (\nu) dt 
$$
exists and is $h_\R$- ergodic, then there exists a set $Z\subset \R$ of zero upper density
such that in the weak* topology
$$
\lim_{t\not \in Z}    (g_t)_* (\nu) =\mu\,.
$$ 
\end{theorem}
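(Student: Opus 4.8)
The plan is to lift the one-parameter family $t\mapsto(g_t)_*\nu$ to a family of probability measures on a \emph{space} of probability measures, and then play the hypothesis that $\mu$ is $h_{\R}$-ergodic (hence an extreme point of the set of $h_{\R}$-invariant probability measures) against the fact that its barycenter is a weak* limit of these families. It is convenient to work on the one-point compactification $\overline X$ of $X$: each element of $\SL(2,\R)$ is a homeomorphism of $X$, so the action extends continuously to $\overline X$ fixing the point at infinity, and (assuming $X$ second countable, as in the intended applications) $\overline X$ is compact metrizable. Fix a metric $d$ inducing the weak* topology on $\mathcal P(\overline X)$ and set $\nu_t:=(g_t)_*\nu\in\mathcal P(\overline X)$, a continuous function of $t$; note $\mu\in\mathcal P(\overline X)$ and $\mu$ is $h_{\R}$-invariant (either it is a probability on $X$, or, degenerately, $\delta_\infty$ — the argument below does not distinguish the cases). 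The theorem reduces to showing that for each $\epsilon>0$ the set $Z_\epsilon:=\{t\ge 0:\ d(\nu_t,\mu)\ge\epsilon\}$ has zero upper density: a routine diagonalization over $\epsilon_m\downarrow 0$ — discarding from suitably fast-growing blocks $[R_m,R_{m+1})$ the portion of $Z_{\epsilon_m}$ they meet — then assembles a single set $Z$ of zero upper density off which $\nu_t\to\mu$.

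So suppose, for contradiction, that $Z_\epsilon$ has upper density $\delta>0$ for some $\epsilon>0$. Choose $T_k\to\infty$ with $|Z_\epsilon\cap[0,T_k]|/T_k\to\delta$, and let $\Xi_{T_k}\in\mathcal P(\mathcal P(\overline X))$ be the push-forward of normalized Lebesgue measure on $[0,T_k]$ under $t\mapsto\nu_t$. By compactness of $\mathcal P(\mathcal P(\overline X))$ I pass to a subsequence along which $\Xi_{T_k}\to\Xi$ weak*. The barycenter map $\mathcal P(\mathcal P(\overline X))\to\mathcal P(\overline X)$ is weak*-continuous and sends $\Xi_{T_k}$ to $\frac1{T_k}\int_0^{T_k}\nu_t\,dt$, which tends to $\mu$ by hypothesis; hence $\operatorname{bar}(\Xi)=\mu$. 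Moreover $\{\lambda:\ d(\lambda,\mu)\ge\epsilon\}$ is closed and has $\Xi_{T_k}$-measure exactly $|Z_\epsilon\cap[0,T_k]|/T_k\to\delta$, so $\Xi(\{\lambda:\ d(\lambda,\mu)\ge\epsilon\})\ge\delta>0$.

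The heart of the argument is that $\Xi$ is supported on the compact convex set $\mathcal M_h\subset\mathcal P(\overline X)$ of $h_{\R}$-invariant probability measures. It suffices to prove that for every $f\in C(\overline X)$ and $v\in\R$ one has $\frac1{T_k}\int_0^{T_k}\big|\langle f\circ h_v-f,\nu_t\rangle\big|\,dt\to 0$: integrating the bounded continuous function $\lambda\mapsto|\langle f\circ h_v-f,\lambda\rangle|$ against $\Xi_{T_k}\to\Xi$ then forces $\langle f\circ h_v-f,\lambda\rangle=0$ for $\Xi$-a.e.\ $\lambda$, and ranging over countable dense families of $f$'s and $v$'s yields $h_{\R}$-invariance of $\Xi$-a.e.\ $\lambda$. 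In case (1) each $\nu_t$ is itself $h_{\R}$-invariant, by $h_ug_t=g_th_{ue^{-2t}}$, so the integrand vanishes. In case (2) the same commutation relation together with a change of variables identifies $\nu_t$ with the normalized horocycle arc of length $Se^{2t}$ based at $g_tx$, whence $\|(h_v)_*\nu_t-\nu_t\|\le 2|v|/(Se^{2t})$ and the integrand is $O(e^{-2t})$. Case (3) is the main obstacle: one must show that push-forwards of circle arcs by $g_t$ become asymptotically $h_{\R}$-invariant. Here I would use the decomposition $g_{t_0}r_\theta=h_{-e^{2t_0}\cot\theta}\,g_{t_0-\log\sin\theta}\,r_{\pi/2}\,h_{-\cot\theta}$, valid for $\theta\in(0,\pi)$, which exhibits $(g_{t_0})_*\nu$ — up to a piece of mass $O(\delta_0)$ carried by the directions within $\delta_0$ of $\{0,\pi\}$ — as an integral of short horocyclic pieces with slowly varying base points; applying $g_t$ stretches the horocyclic lengths by $e^{2t}$, so the estimate of case (2), in this ``varying base'' form, gives asymptotic $h_{\R}$-invariance up to the $O(\delta_0)$ error, and $\delta_0\to 0$ finishes it. Making the singular-direction bound and the varying-base estimate precise is the real technical work.

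Finally, the contradiction. Since $\mathcal M_h$ is a compact convex subset of the locally convex space of signed measures with the weak* topology and $\mu\in\mathcal M_h$ is an extreme point of $\mathcal M_h$ (an $h_{\R}$-invariant probability being extreme in $\mathcal M_h$ precisely when it is ergodic), a barycenter representation $\mu=\operatorname{bar}(\Xi)$ with $\Xi$ supported on $\mathcal M_h$ must be trivial, i.e.\ $\Xi=\delta_\mu$. Indeed, if $\Xi\ne\delta_\mu$ there is a closed $F\subset\mathcal M_h$ with $\mu\notin F$ and $0<\Xi(F)<1$ — the bound $\Xi(F)<1$ because Milman's theorem forbids $\mu\in\overline{\operatorname{conv}}F$ — and then $\mu=\Xi(F)\,\operatorname{bar}(\Xi|_F)+(1-\Xi(F))\,\operatorname{bar}(\Xi|_{F^c})$ is a nontrivial convex combination whose first term lies in $\overline{\operatorname{conv}}F$ and is therefore $\ne\mu$, contradicting extremality. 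But $\Xi=\delta_\mu$ forces $\Xi(\{\lambda:\ d(\lambda,\mu)\ge\epsilon\})=0$, contradicting the lower bound $\delta>0$ obtained above. Hence every $Z_\epsilon$ has zero upper density, and the theorem follows.
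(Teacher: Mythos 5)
Your overall strategy coincides with the paper's: lift $t\mapsto(g_t)_*\nu$ to probability measures on the compact space of measures on the one-point compactification (the paper's $B_1$), show any weak* limit of the lifted time averages has barycenter $\mu$ and is supported on the $h_\R$-invariant measures, invoke extremality of the ergodic measure $\mu$ to force the limit to be $\delta_\mu$, and then assemble $Z$ by the same neighborhood-basis diagonalization. Your local variants are fine and in places cleaner than the paper's: the portmanteau/contradiction setup, the Milman-type extremality argument (the paper is terse at the corresponding step), and the direct criterion $\frac1{T}\int_0^T\vert\langle f\circ h_v-f,\nu_t\rangle\vert\,dt\to0$ in place of the paper's convex-neighborhood construction with the averaged measure $\nu_{\mathcal C}$. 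Cases (1) and (2) are handled exactly as in the paper.

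The genuine gap is case (3), which you explicitly defer (``the real technical work'') and which is precisely where the paper supplies the quantitative content of the proof. Your identity $g_{t}r_\theta=h_{-e^{2t}\cot\theta}\,g_{t-\log\sin\theta}\,r_{\pi/2}\,h_{-\cot\theta}$ is correct, but the phrase ``the estimate of case (2), in this varying-base form'' hides the delicate point: if on a block of directions of angular length $\eta$ you freeze the base point $y_{t,\theta_0}=g_{t-\log\sin\theta_0}r_{\pi/2}h_{-\cot\theta_0}x$ and compare $h_{u(\theta)}y_{t,\theta}$ with $h_{u(\theta)}y_{t,\theta_0}$ at the \emph{same} parameter $u(\theta)=-e^{2t}\cot\theta$, the base drift (of size $O(\eta)$ away from the singular directions) gets conjugated by $h_u$ with $\vert u\vert\asymp e^{2t}$ and is amplified to size $\asymp\eta e^{2t}$; since the case-(2) boundary estimate needs the stretched pieces to have $u$-length $\asymp\eta e^{2t}\to\infty$, the two requirements are incompatible and the naive execution fails. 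To rescue your sketch one must show that this amplified error is essentially \emph{along} the horocycle direction, i.e.\ that the block is close to a genuine horocycle segment through $y_{t,\theta_0}$ up to a reparametrization with Jacobian $1+O(\eta)$ — or argue as the paper does, comparing $g_tr_{\theta_0+s}x$ with $h_{se^{2t}}g_tr_{\theta_0}x$ over pieces of angular size $\asymp e^{-\alpha t}$ with $\alpha$ in an intermediate window, so that the pointwise approximation error (from the element $g_tr_sh_{-s}g_{-t}$) tends to $0$ while each resulting horocycle segment has length $e^{(2-\alpha)t}\to\infty$ and the case-(2) bound applies. As written, this estimate — the only genuinely technical step of the theorem — is missing from your proposal.
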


Our second result, on Birkhoff genericity, is as follows.

Let $I\subset \T$ be a compact subinterval.   Let $\delta^I_\infty$ denote the Dirac mass at the point at infinity of the one-point compactification $S_I$ of the 
semi-infinite cylinder $[1, +\infty) \times I$.  

\begin{theorem} \label{thm:BG}  For any sequence $(\pi_n)$ of probability measures converging to the Dirac measure $\delta^I_\infty$, the following holds. 
Let us assume that in the weak* sense
$$
\mu:=\lim_{n\to+\infty} \int_{S_I} \frac{1}{T}  \int_0^T  (g_t\circ r_\theta)_* (\delta_x)dt   d\pi_n (T, \theta)
$$
then there exists a set ${\mathcal Z }\subset [1, +\infty) \times I$ with $\lim_{n\to +\infty}\pi_n ({\mathcal Z}) =0$
such that  in the weak* topology we have
$$
\lim_{(T,\theta)\not\in {\mathcal Z}}   \frac{1}{T} \int_0^T  (g_t\circ r_\theta)_* (\delta_x)dt = \mu\,.
$$
\end{theorem}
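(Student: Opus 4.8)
The plan is to mimic the lifting argument used for Theorem~\ref{thm:HP}, working in the space $\mathcal P(S_I^{\SL})$ of probability measures on a suitable compactification. First I would set up the framework: fix a metric inducing the weak* topology on the (compact) space $\mathcal P(\hat X)$ of Borel probability measures on the one-point compactification $\hat X$ of $X$, and for each $(T,\theta)\in[1,+\infty)\times I$ introduce the ergodic average $\beta_{T,\theta}:=\frac1T\int_0^T (g_t\circ r_\theta)_*(\delta_x)\,dt\in\mathcal P(\hat X)$. The measures $\pi_n$ live on the cylinder $[1,+\infty)\times I$, and pushing $\pi_n$ forward under the map $(T,\theta)\mapsto\beta_{T,\theta}$ gives a sequence $P_n$ of probability measures on $\mathcal P(\hat X)$. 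By compactness, after passing to a subsequence, $P_n$ converges weak* to some $P\in\mathcal P(\mathcal P(\hat X))$, and its barycenter is exactly $\mu$.

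The key step is to show $P$ is supported on $\{\mu\}$. Here I would exploit two facts. Since $\pi_n\to\delta^I_\infty$, the ``base time'' $T$ tends to $+\infty$ in $\pi_n$-probability; combined with the commutation relation $g_t\circ h_s=h_{se^{2t}}\circ g_t$ and the standard observation that $g_t\circ r_\theta$ for $\theta\ne 0$ asymptotically aligns with the unstable horocycle direction, any weak* limit point of $\beta_{T,\theta}$ along $T\to+\infty$ is $h_\R$-invariant. (This is the point where one must be careful about $\theta$ near $0$; but the Dirac mass is at infinity of $[1,+\infty)\times I$, so for $n$ large the $\pi_n$-mass concentrates at large $T$ uniformly in $\theta\in I$, and for fixed nonzero $\theta$ the alignment is exponentially fast, while the interval $I$ being compact and away from is handled by a uniform estimate.) Thus $P$ is supported on the set $\mathcal M^{h}$ of $h_\R$-invariant probability measures on $\hat X$. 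Now I invoke the hypothesis that the barycenter $\mu$ is $h_\R$-ergodic. Ergodic measures are extreme points of the convex set $\mathcal M^h$, so the only way $\mu$ can be the barycenter of a measure $P$ supported on $\mathcal M^h$ is for $P=\delta_\mu$. (If one worries that $\hat X$ carries mass at the point at infinity, note $\mu$ is a probability measure on $X$ by hypothesis, so the limit measures lose no mass; alternatively run the whole argument with the restriction to $\mathcal P(X)$ once no-escape-of-mass is established from $\mu\in\mathcal P(X)$.)

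Having shown $P_n\to\delta_\mu$ along the full sequence (the subsequence argument forces every subsequential limit to be $\delta_\mu$), I would translate this back. For any $\varepsilon>0$, letting $d$ be the weak* metric and $\mathcal Z_n^\varepsilon:=\{(T,\theta): d(\beta_{T,\theta},\mu)>\varepsilon\}$, Chebyshev's inequality applied to $P_n$ gives
$$
\pi_n(\mathcal Z_n^\varepsilon)\;\le\;\frac1\varepsilon\int d(\nu,\mu)\,dP_n(\nu)\;\longrightarrow\;0.
$$
A standard diagonal construction over $\varepsilon=1/k$ then produces a single set $\mathcal Z$ with $\pi_n(\mathcal Z)\to 0$ and $\beta_{T,\theta}\to\mu$ for $(T,\theta)\notin\mathcal Z$, which is the claimed conclusion.

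The main obstacle I anticipate is the $h_\R$-invariance step: one has to quantify how the geodesic-rotated Dirac orbit $\{g_t r_\theta x: 0\le t\le T\}$ becomes asymptotically a long unstable horocycle arc as $T\to+\infty$, with error controlled uniformly over the compact interval $I$, and feed this into the ergodic-average limit so that the limit genuinely annihilates the generator of $h_\R$. The commutation relation reduces this to an explicit change-of-variables estimate, but the uniformity in $\theta$ and the interchange of limits (in $T$ and in $n$) require care. Everything else — the compactness, the barycenter identity, the extreme-point argument, and the Chebyshev/diagonal passage — is routine once this is in place.
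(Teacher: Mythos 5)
Your overall scheme (lift $\pi_n$ to measures $P_n$ on the space of probability measures via $(T,\theta)\mapsto\beta_{T,\theta}$, identify the barycenter of any weak* limit $P$ with $\mu$, conclude $P=\delta_\mu$ by extremality, and then recover the exceptional set $\mathcal Z$ by a Chebyshev--diagonal extraction) is the same as the paper's. The genuine gap is in the invariance step, precisely the one you flag as the main obstacle. For fixed $\theta$, the measure $\beta_{T,\theta}=\frac1T\int_0^T(g_t)_*(\delta_{r_\theta x})\,dt$ is a Birkhoff average along a single geodesic orbit; its weak* limit points as $T\to+\infty$ are $g_\R$-invariant (by telescoping, $\int_0^T(f\circ g_\tau-f)(g_t r_\theta x)\,dt=\int_T^{T+\tau}f(g_t r_\theta x)\,dt-\int_0^\tau f(g_t r_\theta x)\,dt$ is bounded by $2\vert\tau\vert\,\vert f\vert_{L^\infty}$), but there is no reason for them to be $h_\R$-invariant: if $r_\theta x$ lies on a closed geodesic, the limit is the measure on that periodic orbit, which is not horocycle invariant. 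The asymptotic alignment of $g_t\circ r_\theta$ with the unstable horocycle coming from $g_t\circ h_s=h_{se^{2t}}\circ g_t$ is what is used for push-forwards of circle \emph{arcs} in Theorem~\ref{thm:HP}, i.e.\ when one averages over $\theta$; it does not give $h_\R$-invariance of time averages at a fixed $\theta$. Hence your claim that $P$ is supported on the set $\mathcal M^h$ of horocycle invariant measures fails, and with it the appeal to $h_\R$-ergodicity of $\mu$, which in any case is not a hypothesis available in this setting.

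The correct version of the step, and the paper's route, is to show that any weak* limit $\Pi$ of the push-forwards of $\pi_n$ under $(T,\theta)\mapsto\beta_{T,\theta}$ is supported on the set of $g_\R$-invariant measures: one takes a putative non-invariant $\nu_0$ in the support, a closed convex neighborhood $\mathcal C\subset B_1$ with $\Pi(\partial\mathcal C)=0$ on which the non-invariance inequality persists, and shows that the barycenter of $\Pi$ over $\mathcal C$ is $g_\R$-invariant because the telescoped quantity above, divided by $T$, tends to zero and $T\to+\infty$ in $\pi_n$-probability (since $\pi_n\to\delta^I_\infty$); this is a contradiction. One then concludes $\Pi=\delta_\mu$ from the barycenter identity together with the $g_\R$-ergodicity of $\mu$ (the ergodicity hypothesis implicit in the theorem and explicit in Corollary~\ref{cor:BG1}), not $h_\R$-ergodicity. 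Your final Chebyshev/diagonal passage to a single set $\mathcal Z$ with $\pi_n(\mathcal Z)\to0$ is fine and matches the paper's construction; only the invariance/ergodicity step needs to be replaced as above.
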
 

As a consequence of Theorem~\ref{thm:HP} we can derive the following:

\begin{corollary}  
\label{cor:horo_limit}
Let $\nu$ be a horocycle invariant measure, or the probability measure uniformly distributed on
a horocycle or circle arc on the moduli space. Let $\mu$ be the unique affine probability $\SL(2, \R)$-invariant
measure supported on the affine sub-orbifold $ \overline{ \SL(2, \R)(\text{\rm supp}(\nu))}$. 
There exists a set $Z \subset \R$ of zero upper density such that
$$
\lim_{t\not \in Z}    (g_t)_* (\nu) =\mu\,.
$$
\end{corollary}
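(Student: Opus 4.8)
The plan is to derive the corollary directly from Theorem~\ref{thm:HP} applied to the $\SL(2,\R)$-action on the moduli space of Abelian differentials (or on an invariant stratum/component). Thus it suffices to verify the two hypotheses of that theorem for $\nu$ in our list: namely that the Cesàro average
$$
\mu=\lim_{T\to+\infty}\frac{1}{T}\int_0^T (g_t)_*(\nu)\,dt
$$
exists as a \emph{probability} measure and is $h_\R$-ergodic, and in addition that it coincides with the affine $\SL(2,\R)$-invariant measure on $\overline{\SL(2,\R)(\mathrm{supp}\,\nu)}$. Granting these, Theorem~\ref{thm:HP} produces at once a set $Z\subset\R$ of zero upper density with $\lim_{t\notin Z}(g_t)_*(\nu)=\mu$.

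First I would establish existence of the limit with no escape of mass. Tightness of the family $\{(g_t)_*(\nu)\}_{t\ge 0}$ in the moduli space follows from the quantitative non-divergence estimates for the Teichm\"uller geodesic flow (Eskin--Masur \cite{EMa}, together with the recurrence estimates underlying \cite{EM},\cite{EMM}), so every weak* limit point of the Cesàro averages is a probability measure supported in the moduli space. The equidistribution results of Eskin--Mirzakhani \cite{EM} and Eskin--Mirzakhani--Mohammadi \cite{EMM} — which concern precisely the weak* convergence of the averages $\frac1T\int_0^T (g_t)_*(\nu)\,dt$ for $\nu$ a horocycle invariant measure, a horocycle arc, or a circle arc — then guarantee that $\mu$ exists and is an affine $\SL(2,\R)$-invariant probability measure. (One way to see $\SL(2,\R)$-invariance intrinsically: any weak* limit of the Cesàro averages is invariant under $h_\R$, since push-forward by $g_t$ preserves $h_\R$-invariance, and is also invariant under $g_\R$ by the standard telescoping argument; hence it is invariant under the upper triangular subgroup and therefore affine $\SL(2,\R)$-invariant by \cite{EMM}.) Finally, since $\mathrm{supp}\,\nu$ is contained in a single orbit closure $\overline{\SL(2,\R)x}$ — immediate for a horocycle or circle arc based at $x$, and for a horocycle invariant measure using the $\SL(2,\R)$-invariance just noted — the structure theorem of \cite{EMM} identifies $\overline{\SL(2,\R)(\mathrm{supp}\,\nu)}$ as an affine sub-orbifold carrying a unique affine $\SL(2,\R)$-invariant probability measure, which is therefore $\mu$.

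It remains to check $h_\R$-ergodicity of $\mu$. Being the unique affine $\SL(2,\R)$-invariant probability measure on the affine sub-orbifold $\overline{\SL(2,\R)(\mathrm{supp}\,\nu)}$, the measure $\mu$ is $\SL(2,\R)$-ergodic. By Moore's ergodicity theorem, a finite $\SL(2,\R)$-invariant and $\SL(2,\R)$-ergodic measure remains ergodic under any non-compact closed subgroup; in particular it is ergodic under the unipotent subgroup $h_\R$. Hence both hypotheses of Theorem~\ref{thm:HP} are satisfied, and the corollary follows.

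I expect the only real content — hence the main obstacle — to be the input from \cite{EM},\cite{EMM},\cite{EMa}: that the geodesic Cesàro averages of $\nu$ have no loss of mass and converge to the affine invariant measure on the orbit closure, and that this orbit closure is itself an affine sub-orbifold. Once these deep facts are invoked, the argument is entirely formal, consisting of the reduction to Theorem~\ref{thm:HP} and the standard passage from $\SL(2,\R)$-ergodicity to $h_\R$-ergodicity via Moore's theorem.
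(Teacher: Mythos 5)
Your proposal is correct and follows essentially the same route as the paper, which derives Corollary~\ref{cor:horo_limit} directly from Theorem~\ref{thm:HP} by invoking the Eskin--Mirzakhani and Eskin--Mirzakhani--Mohammadi equidistribution theorems (together with Eskin--Masur non-divergence) to verify that the Ces\`aro limit exists, has no escape of mass, and equals the affine $\SL(2,\R)$-invariant measure on the orbit closure, with $h_\R$-ergodicity following from $\SL(2,\R)$-ergodicity via the Mautner/Howe--Moore phenomenon. Your write-up simply makes explicit the standard details that the paper leaves implicit.
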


\begin{conjecture} Corollary~\ref{cor:horo_limit} holds with exceptional set $Z=\emptyset$.
\end{conjecture}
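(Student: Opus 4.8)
The plan is to prove the equivalent assertion that $\mu$ is the \emph{only} weak* limit point of the family $(g_t)_*(\nu)$ as $t\to+\infty$; the conjecture then follows, since by Corollary~\ref{cor:horo_limit} the measure $\mu$ is already the density-one limit. So fix a sequence $t_n\to+\infty$ and, after passing to a subsequence, assume $(g_{t_n})_*(\nu)$ converges weak* to some Radon measure $\mu'$. The first thing to check is that $\mu'$ is a probability measure, i.e.\ that there is no escape of mass along the sequence. For $\nu$ horocycle-invariant this is the uniform non-divergence estimate of Eskin--Masur, reinforced in \cite{EMM}: there is a proper function $V$ on the moduli space with $\int_X V\,d(g_t)_*(\nu)$ bounded uniformly in $t$, so the family $\{(g_t)_*(\nu)\}_{t\ge 0}$ is uniformly tight; for horocycle and circle arcs one invokes instead the analogous uniform-in-time non-divergence of the flow and circle averages from \cite{EMa},\cite{EM}.

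Next I would establish the structure of $\mu'$: it is $h_\R$-invariant and supported on $\mathcal M:=\overline{\SL(2,\R)(\mathrm{supp}(\nu))}$. For the invariance, when $\nu$ is $h_\R$-invariant the relation $g_t h_s g_t^{-1}=h_{s e^{2t}}$ forces $(h_s)_*(g_t)_*(\nu)=(g_t)_*(h_{s e^{-2t}})_*(\nu)=(g_t)_*(\nu)$ for every $t$, and $h_\R$-invariance passes to weak* limits; when $\nu$ is a normalized horocycle arc of length $S$ at $x$, the commutation relation makes $(g_t)_*(\nu)$ the normalized horocycle arc of length $S e^{2t}$ at $g_t x$, and since $S e^{2t}\to+\infty$ the total variation of $(h_r)_*(g_t)_*(\nu)-(g_t)_*(\nu)$ is $O(r/Se^{2t})$, so the limit is $h_\R$-invariant; the circle-arc case reduces to the horocycle-arc case through the unwinding of $g_t r_\theta$ already used in the proof of Theorem~\ref{thm:HP}. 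For the support, $(g_{t_n})_*(\nu)$ is carried by $\SL(2,\R)\cdot\mathrm{supp}(\nu)$, and $\mathcal M$ is a closed affine $\SL(2,\R)$-invariant suborbifold by the orbit-closure theorem of \cite{EM},\cite{EMM}.

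Since $\mu'$ is itself a horocycle-invariant probability measure, the EMM/EM input underlying Corollary~\ref{cor:horo_limit} applies to it: the Ces\`aro average $\frac1T\int_0^T(g_t)_*(\mu')\,dt$ converges to the affine $\SL(2,\R)$-invariant measure on $\mathcal M':=\overline{\SL(2,\R)(\mathrm{supp}(\mu'))}\subseteq\mathcal M$. It would therefore suffice to show that (i) $\mathcal M'=\mathcal M$, and (ii) $\mu'$ is already $g_\R$-invariant: indeed, under (ii) we have $\mu'=\frac1T\int_0^T(g_t)_*(\mu')\,dt$ for every $T$, hence $\mu'$ equals the affine measure on $\mathcal M'$, which by (i) is $\mu$ itself. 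So the whole problem is reduced to proving (i) and (ii).

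The content of (i) and (ii) together is exactly that $(g_{t_n})_*(\nu)$ cannot, along a zero-upper-density sequence of times, concentrate near an invariant measure other than $\mu$ — in particular near a proper affine suborbifold $\mathcal N\subsetneq\mathcal M$ — and this is the genuinely hard point, which is why the statement is posed here only as a conjecture: Theorem~\ref{thm:HP} and the extremality-of-ergodic-measures argument yield no information along a set of times of zero upper density, and such a set is invisible to Ces\`aro-type reasoning. Removing it requires a genuinely quantitative non-concentration (recurrence) estimate for the family $\{(g_t)_*(\nu)\}$ valid for \emph{all} large $t$, near every proper affine suborbifold of $\mathcal M$, in the spirit of the fine direction-by-direction analysis of Chaika and Eskin \cite{CE}. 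Producing such a uniform-in-time estimate, presumably via the random-walk and suborbifold-avoidance machinery of \cite{EM},\cite{EMM}, is the step I expect to be the main obstacle; granted it, the paragraphs above complete the proof.
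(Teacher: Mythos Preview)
This statement is posed in the paper as a \emph{conjecture}, not a theorem: the paper gives no proof, and explicitly records it as open (noting only the partial result of Bainbridge--Smillie--Weiss in certain $H(1,1)$ loci, and the Chaika--Smillie--Weiss announcement that horocycle orbit closures can be badly behaved without contradicting the conjecture). So there is nothing to compare your attempt against; your proposal is, appropriately, not a proof but an outline of what a proof would have to accomplish, together with an honest identification of the missing ingredient.

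Your reduction is reasonable as far as it goes, but let me flag one point. You package the obstruction as (i) $\mathcal M'=\mathcal M$ and (ii) $\mu'$ is $g_\R$-invariant, and then paraphrase both as a single ``non-concentration near proper affine suborbifolds'' issue. These are not the same thing. Item (ii) is already the crux and does not obviously reduce to suborbifold avoidance: a weak* subsequential limit $\mu'$ of $(g_{t_n})_*\nu$ is $h_\R$-invariant, but there is no telescoping mechanism (as there is for Ces\`aro averages) forcing it to be $g_\R$-invariant, and $h_\R$-invariance alone does not put you in the Eskin--Mirzakhani $P$-invariant framework. Indeed, the Chaika--Smillie--Weiss results the paper mentions indicate that $h_\R$-invariant measures on moduli space can be far from affine. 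So a proof would need either a direct argument that every subsequential limit is $P$-invariant (hence $\SL(2,\R)$-invariant and affine by \cite{EM}), or the kind of uniform-in-$t$ quantitative equidistribution you allude to; neither is presently available in general, which is precisely why the paper states this as a conjecture.
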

As mentioned above this conjecture has been proven by M.~Bainbridge, J.~Smillie and B.~Weiss  for certain invariant orbifolds in the stratum $H(1,1)$ 
of Abelian differentials with two simple zeros on genus $2$ surfaces (see \cite{BSW}, Theorems  1.5 and 12.7). 
J.~Chaika, J.~Smillie and B.~Weiss have recently announced that the Teichm\"uller horocycle flow (in genus $2$) has orbits which are not contained in the support of their limit measures and orbits which have no (unique) limit measure. These results however do not  contradict our conjecture. 

\smallskip
By the argument of \cite{EMa}) we can also derive the following improved version
of the ``weak asymptotic formula'' for the counting function of cylinders in translation surfaces and rational billiards
(compare Theorem 1.7 in \cite{EM} or Theorem 2.12 in \cite{EMM}).
Let $Q$ be a rational polygon, and let $N(Q,T)$ denote the number of cylinders of periodic trajectories of length at most $T>0$ 
for the billiard flow on $Q$.

\begin{corollary} There exists a constant $C_Q$ (a Siegel-Veech constant) and a set $Z_Q\subset \R$ of zero upper density such that
$$
\lim_{t \not \in Z_Q}   \frac{N(Q, e^t)}{ e^{2t}} =  C_Q\,.
$$
\end{corollary}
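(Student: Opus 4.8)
The plan is to combine three ingredients: the unfolding construction passing from the billiard in $Q$ to a translation surface, the reduction of the cylinder counting function to circle averages of a Siegel--Veech transform along the Teichm\"uller geodesic flow in the style of \cite{EMa}, and the improved convergence of geodesic push-forwards of circle arcs furnished by Corollary~\ref{cor:horo_limit}. First I would fix the unfolding: by the Zemlyakov--Katok construction $Q$ unfolds to a translation surface $\omega_0$ in some stratum $\mathcal H$ of Abelian differentials, in such a way that cylinders of periodic billiard trajectories of length at most $R$ on $Q$ are in bijective correspondence, up to the action of the finite dihedral group $D_Q$ generated by the reflections in the sides of $Q$, with cylinders of closed geodesics of length at most $R$ on $\omega_0$. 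Writing $N_{\mathrm{cyl}}(\omega_0, R)$ for the number of holonomy vectors of such cylinders on $\omega_0$, one gets $N(Q, e^t) = \kappa_Q\, N_{\mathrm{cyl}}(\omega_0, e^t)$ up to a negligible error, with $\kappa_Q$ depending only on $D_Q$, so it suffices to treat $N_{\mathrm{cyl}}$. I would then work with the circle arc $\nu := \frac{1}{2\pi}\int_0^{2\pi} (r_\theta)_*(\delta_{\omega_0})\, d\theta$ through $\omega_0$ and let $\mu$ be the affine $\SL(2,\R)$-invariant probability measure on $\overline{\SL(2,\R)\,\omega_0} = \overline{\SL(2,\R)(\mathrm{supp}\,\nu)}$ provided by Corollary~\ref{cor:horo_limit}.

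Next, following \cite{EMa}, I would sandwich the normalized counting function between circle averages of Siegel--Veech transforms. For $\varepsilon > 0$ let $\hat\phi^{\pm}_\varepsilon$ be the Siegel--Veech transforms on $\mathcal H$ of bounded, compactly supported functions on $\R^2$ giving $\varepsilon$-outer and $\varepsilon$-inner approximations to the planar region dual to counting holonomy vectors of cylinders in the unit disk. A direct geometric comparison of the disk of radius $e^t$ with the $g_t$-images of these regions should yield, for each fixed $\varepsilon > 0$,
\[
\int_X \hat\phi^{-}_\varepsilon \, d\big((g_t)_*\nu\big) - o_\varepsilon(1) \ \le\ \frac{N_{\mathrm{cyl}}(\omega_0, e^t)}{e^{2t}} \ \le\ \int_X \hat\phi^{+}_\varepsilon \, d\big((g_t)_*\nu\big) + o_\varepsilon(1),
\]
where $o_\varepsilon(1) \to 0$ as $t \to +\infty$ for each fixed $\varepsilon$. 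I would also record two facts already present in \cite{EMa}: the quantitative non-divergence estimate of Eskin--Masur--Mozes (the ``$\alpha$-function'' inequality for circle averages under $g_t$) provides $\delta > 0$ and $C_0 > 0$ with $\int_X (\hat\phi^{\pm}_\varepsilon)^{1+\delta}\, d\big((g_t)_*\nu\big) \le C_0$ uniformly in $t$; and the Siegel--Veech formula gives $\hat\phi^{\pm}_\varepsilon \in L^1(\mu)$ together with $\int_X (\hat\phi^{+}_\varepsilon - \hat\phi^{-}_\varepsilon)\, d\mu \to 0$ as $\varepsilon \to 0$.

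Then I would pass to the limit. By \cite{EMM}, \cite{EM} the Ces\`aro limit $\lim_{T \to +\infty} \frac{1}{T}\int_0^T (g_t)_*\nu \, dt$ exists and equals $\mu$, which is $\SL(2,\R)$-ergodic and therefore, by the Mautner phenomenon, $h_\R$-ergodic; hence Corollary~\ref{cor:horo_limit} applies and produces a set $Z \subset \R$ of zero upper density with $(g_t)_*\nu \to \mu$ in the weak* topology along $\R \setminus Z$. The uniform $L^{1+\delta}$ bound makes the functions $\hat\phi^{\pm}_\varepsilon$ uniformly integrable with respect to $\{(g_t)_*\nu : t \notin Z\}$, so weak* convergence upgrades to $\int_X \hat\phi^{\pm}_\varepsilon \, d\big((g_t)_*\nu\big) \to \int_X \hat\phi^{\pm}_\varepsilon \, d\mu$ along $\R \setminus Z$. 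Feeding this into the sandwich of the previous step and letting $\varepsilon \to 0$ shows that $N_{\mathrm{cyl}}(\omega_0, e^t)/e^{2t}$ converges along $\R \setminus Z$ to $c_Q := \lim_{\varepsilon \to 0}\int_X \hat\phi^{\pm}_\varepsilon \, d\mu$, which is finite and positive (a Siegel--Veech constant for $\mu$). Setting $Z_Q := Z$ and $C_Q := \kappa_Q\, c_Q$ completes the argument.

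The step I expect to be the main obstacle is controlling the unboundedness of the Siegel--Veech transform: weak* convergence of $(g_t)_*\nu$ to $\mu$ by itself gives no information about $\int_X \hat\phi^{\pm}_\varepsilon \, d\big((g_t)_*\nu\big)$, and the cuspidal contribution must be shown to be uniformly small in $t$, which is exactly where the quantitative recurrence estimates of Eskin--Masur(--Mozes) are indispensable. A secondary technical point is to make sure that the geometric comparison in the sandwich step carries a genuinely vanishing error term as $t \to +\infty$ (not merely an error that vanishes in a Ces\`aro sense), which the $\varepsilon$-bracketing accommodates but which has to be arranged with some care.
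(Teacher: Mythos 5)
Your proposal is correct and follows exactly the route the paper intends: the paper gives no detailed proof but simply invokes ``the argument of \cite{EMa}'' combined with Corollary~\ref{cor:horo_limit}, i.e.\ unfolding, the Siegel--Veech bracketing of the counting function by circle averages along $g_t$, and the Eskin--Masur--Mozes $L^{1+\delta}$ non-divergence bounds to upgrade the weak* convergence of $(g_t)_*\nu$ along $t\notin Z$ to convergence of the (unbounded) Siegel--Veech integrals. Your write-up is a faithful expansion of that argument, including the correct identification of the main technical point (uniform integrability of the Siegel--Veech transform).
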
 

For {\it horospherical measures} we can prove a stronger result.  By a horospherical measure we mean any measure supported on a leaf of the strong stable foliation 
of the Teichm\"uller geodesic flow,  absolutely continuous with continuos density with respect  to the canonical affine measure on the leaf, and with conditional measures along
horocycle orbits equal to one-dimensional Lebesgue measures (see Section~\ref{sec:horospheres}). 

\begin{theorem}
\label{thm:horospheres}
Let $\nu$ be any horospherical probability measure supported on the stable leaf at $x\in \mathcal H_g$ and let $\mu$ denote the
unique $\SL(2, \R)$-invariant affine ergodic probability measure supported on $\overline{\SL(2, \R) x}$. In the weak* topology, we have
$$
\lim_{t\to +\infty} (g_t)_* (\nu)   = \mu\,.
$$
\end{theorem}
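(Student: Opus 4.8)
The plan is to exploit the one structural feature a horospherical measure $\nu$ has which the arcs in Theorem~\ref{thm:HP} lack — \emph{absolute continuity along the whole horospherical leaf} — in order to remove the exceptional set altogether. Throughout, set $\mathcal M:=\overline{\SL(2,\R)x}$, which by \cite{EM},\cite{EMM} is an affine invariant sub‑orbifold; recall that the Teichm\"uller geodesic flow on $\mathcal M$ is partially hyperbolic, that the leaf $W=W(x)$ supporting $\nu$ is, as recalled in \S\ref{sec:horospheres}, expanded by $g_t$ as $t\to+\infty$ and foliated by horocycle orbits, and that a complement to $TW$ inside $T\mathcal M$ is given by the flow direction together with the contracted (stable) directions. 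By \cite{EM},\cite{EMM} the Ces\`aro averages $\frac1T\int_0^T(g_t)_*\nu\,dt$ converge to a probability measure $\mu_0$; it is $g_\R$-invariant, since Ces\`aro limits of geodesic push‑forwards always are, and it is $h_\R$-invariant, since the conditionals of $\nu$ along horocycle orbits are Lebesgue and so the measures $(g_t)_*\nu$ are asymptotically $h_\R$-invariant. Being invariant under $\SL(2,\R)=\langle g_\R,h_\R\rangle$, affine, and supported in $\mathcal M$, $\mu_0$ equals the measure $\mu$ of the statement, which is moreover $\SL(2,\R)$-ergodic, hence $h_\R$-ergodic by the Mautner phenomenon. (Truncating the continuous $L^1$ density of $\nu$ along $W$ and passing to the limit at the end, we may assume the density is bounded with compact support.)

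First I would \emph{thicken} $\nu$ transversally to $W$: averaging $\nu$ over an $\varepsilon$-ball of slides in the complementary (flow and stable) directions, against a smooth mollifier $\rho$, yields $\tilde\nu_\varepsilon:=\int_{B_\varepsilon}(\phi_v)_*\nu\,d\rho(v)$, which by the local product structure of $\mathcal M$ is absolutely continuous with bounded density with respect to $\mu$ and of total mass one. Since $\mu$ is $g_t$-invariant, $(g_t)_*\tilde\nu_\varepsilon\le C_\varepsilon\,\mu$ for all $t\ge0$. Writing $v=(t_0,w)$ with $t_0$ the flow coordinate and $w$ the stable one, the uniform contraction of the stable directions gives $g_t\,\phi_{(t_0,w)}\,g_{-t}\to g_{t_0}$ uniformly on compacta as $t\to+\infty$, whence, using the non‑divergence of $\{(g_t)_*\nu\}$ from \cite{EMM} to localize,
$$
(g_t)_*\tilde\nu_\varepsilon=\int_{B_\varepsilon}\big(g_t\,\phi_{v}\,g_{-t}\big)_*(g_t)_*\nu\;d\rho(v)=\int_{-\varepsilon}^{\varepsilon}(g_{t+t_0})_*\nu\;d\bar\rho(t_0)+o(1)\qquad(t\to+\infty),
$$
$\bar\rho$ being the marginal of $\rho$ on the flow coordinate.

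Since $(g_t)_*\nu$ is asymptotically $h_\R$-invariant, the displayed identity shows $(g_t)_*\tilde\nu_\varepsilon$ is too; combined with $(g_t)_*\tilde\nu_\varepsilon\le C_\varepsilon\mu$ and the non‑divergence estimates, every weak* limit point of $\{(g_t)_*\tilde\nu_\varepsilon\}$ is an $h_\R$-invariant probability measure absolutely continuous with respect to $\mu$ — hence, its density being an $h_\R$-invariant $L^1(\mu)$-function and $\mu$ being $h_\R$-ergodic, equal to $\mu$. Therefore $(g_t)_*\tilde\nu_\varepsilon\to\mu$ as $t\to+\infty$, with \emph{no} exceptional set, and by the displayed identity the windowed averages $\int_{-\varepsilon}^{\varepsilon}(g_{t+t_0})_*\nu\,d\bar\rho(t_0)$ converge to $\mu$, for every $\varepsilon>0$. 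To conclude, note that for $f\in C_c^\infty(\mathcal M)$ the function $t\mapsto\int f\,d(g_t)_*\nu=\int f(g_tz)\,d\nu(z)$ is Lipschitz, uniformly in $t$, with constant $\|\mathcal Xf\|_\infty$ ($\mathcal X$ the generator of $g_t$); hence it differs from the corresponding windowed average by at most $\varepsilon\|\mathcal Xf\|_\infty$, so letting $t\to+\infty$ and then $\varepsilon\to0$ gives $\int f\,d(g_t)_*\nu\to\int f\,d\mu$. Density of $C_c^\infty(\mathcal M)$, tightness of $\{(g_t)_*\nu\}$, and the truncation reduction then upgrade this to $(g_t)_*\nu\to\mu$ in the weak* topology.

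The soft ingredients — the Mautner phenomenon, the partial hyperbolicity and local product structure of the Teichm\"uller flow on $\mathcal M$, the uniform equicontinuity of $t\mapsto(g_t)_*\nu$, and the fact that an invariant measure absolutely continuous with respect to an ergodic one is proportional to it — are all standard. The crux, and the one place where the depth of \cite{EM},\cite{EMM} is indispensable, is making these limit arguments legitimate on the \emph{non‑compact} moduli space: ruling out escape of mass to the boundary of the stratum uniformly in $t$, and knowing that the convergence $g_t\phi_vg_{-t}\to g_{t_0}$ controls the measures $(g_t)_*\nu$ rather than merely individual trajectories. I expect this — the interplay of the non‑divergence theorem with the identification of the Ces\`aro limit — to be the main obstacle.
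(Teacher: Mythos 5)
Your route is genuinely different from the paper's. The paper takes as its only dynamical input the Ces\`aro convergence of $(g_t)_*\nu$ supplied by Eskin--Mirzakhani--Mohammadi, extracts (via the argument of Theorem~\ref{thm:HP}) a sequence $t_n\to+\infty$ along which $(g_{t_n})_*\nu\to\mu$ weakly, and then upgrades this to convergence for \emph{all} times by soft functional analysis: the leafwise averaging operator ${\mathcal I}^s_r\circ R_K$ is compact from ${\rm Lip}^{wu}(X)$ to $C^0$, so weak* convergence along $(t_n)$ becomes norm convergence in ${\rm Lip}^{wu}(X)^*$, and $(g_t)_*$ is a weak contraction on that dual space, so closeness to the invariant measure $\mu$ at time $t_n$ persists for all later times; tightness (\cite{MW}, \cite{EMa}) handles noncompactness. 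You instead run a Margulis-type thickening: mollify $\nu$ transversally to the leaf to get $\tilde\nu_\varepsilon$ dominated by $C_\varepsilon\mu$, observe that its push-forwards are asymptotically $h_\R$-invariant, identify every weak* limit point with $\mu$ by $h_\R$-ergodicity of $\mu$ (Mautner) together with absolute continuity, and finally remove the $\varepsilon$-window by the uniform Lipschitz dependence of $t\mapsto (g_t)_*\nu(f)$ for smooth $f$. Where it applies, your argument is attractive: it does not use the equidistribution theorems of \cite{EM}, \cite{EMM} at all (only nondivergence and the structure theory of affine measures), and your window-removal step plays the role that the contraction of $(g_t)_*$ on ${\rm Lip}^{wu}(X)^*$ plays in the paper.

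There is, however, a genuine restriction in scope hidden in the thickening step. Your claim that $\tilde\nu_\varepsilon$ is absolutely continuous with bounded density with respect to $\mu$ presupposes that $\mathrm{supp}(\nu)$ lies inside $\mathcal M=\overline{\SL(2,\R)x}$, that the leaf direction is complementary to the flow and contracted directions \emph{inside} $T\mathcal M$, and that $\mu$ has the corresponding local product structure (Lebesgue in period coordinates on a subspace defined over $\R$). In the setting of Section~\ref{sec:horospheres} the horospherical measure lives on a leaf of the stratum, while $\mu$ may be carried by a proper affine suborbifold; in that case $\tilde\nu_\varepsilon$ is singular with respect to $\mu$, the domination $(g_t)_*\tilde\nu_\varepsilon\le C_\varepsilon\mu$ fails, and the ergodicity identification of the limit points collapses. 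So your proof covers only the case where the leaf is contained in $\mathcal M$ (e.g.\ when the orbit closure is the whole stratum and $\mu$ is Masur--Veech), whereas the paper's compactness/contraction mechanism is insensitive to this and works for whatever $\mu$ the Ces\`aro limit is. Two smaller points: $\langle g_\R,h_\R\rangle$ is the upper-triangular subgroup $P$, not $\SL(2,\R)$, so identifying a $P$-invariant limit with an $\SL(2,\R)$-invariant one is the deep theorem of \cite{EM}, not group generation (this remark is inessential to your main argument, which identifies limits through absolute continuity instead); and both the $o(1)$ in your displayed identity and the asymptotic $h_\R$-invariance rest on unstated but available facts (uniform contraction of the stable holonomy over compact parts, nondivergence of the pushed measures from \cite{MW}, \cite{EMa}) which should be spelled out.
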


Theorem \ref{thm:BG} has the corollaries stated below.

\begin{corollary} \label{cor:BG1}  Let $x\in X$ and let $I \subset \T$. If the weak* limit
$$
\mu:=\lim_{T\to +\infty} \frac{1}{T}  \int_0^T  \frac{1}{\vert I\vert} \int_I   (g_t\circ r_\theta)_* (\delta_x)d\theta dt 
$$
exists and is $g_\R$-ergodic, then for Lebesgue almost all $\theta \in I$  there exists a set $Z_\theta \subset \R$ of zero lower density
such that   we have the weak* limit
$$
\mu:=\lim_{T\not\in Z_\theta}  \frac{1}{T} \int_0^T (g_t\circ r_\theta)_* (\delta_x) dt \,.
$$
\end{corollary}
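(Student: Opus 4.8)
The plan is to deduce the corollary from Theorem~\ref{thm:BG} by applying the latter to a carefully chosen sequence $(\pi_n)$ of probability measures on $S_I$, and then to recover the statement about individual directions $\theta$ by a Fubini argument along a subsequence.

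First I would fix a sequence $T_n\to+\infty$ and set $\pi_n:=\mu_n\otimes\frac{1}{|I|}\mathrm{Leb}_I$, where $\mu_n$ denotes the normalized Lebesgue measure on $[1,T_n]$, regarded as a Borel probability measure on $[1,+\infty)\times I\subset S_I$. Since $\mu_n([1,M])=(M-1)/(T_n-1)\to0$ for every fixed $M$, all the mass of $\pi_n$ escapes to infinity, hence $\pi_n\to\delta^I_\infty$ in the weak* topology of $S_I$. Writing
$$
A_T:=\frac{1}{T}\int_0^T\frac{1}{|I|}\int_I(g_t\circ r_\theta)_*(\delta_x)\,d\theta\,dt
$$
for the double average (a probability measure on $X$), one has
$$
\int_{S_I}\frac{1}{T}\int_0^T(g_t\circ r_\theta)_*(\delta_x)\,dt\,d\pi_n(T,\theta)=\int_1^{T_n}A_T\,d\mu_n(T).
$$
By hypothesis $A_T\to\mu$ in the weak* topology as $T\to+\infty$. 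Since each $A_T$ is a probability measure and $\mu_n$ escapes to infinity, splitting $[1,T_n]$ into $[1,M]$ and $[M,T_n]$ and testing against continuous functions shows $\int_1^{T_n}A_T\,d\mu_n(T)\to\mu$ weak*. Hence the hypothesis of Theorem~\ref{thm:BG} holds for $(\pi_n)$, the limit $\mu$ being the $g_\R$-ergodic measure from the corollary's hypothesis.

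Theorem~\ref{thm:BG} then produces a Borel set $\mathcal Z\subset[1,+\infty)\times I$ with $\pi_n(\mathcal Z)\to0$ and
$$
\lim_{(T,\theta)\notin\mathcal Z}\frac{1}{T}\int_0^T(g_t\circ r_\theta)_*(\delta_x)\,dt=\mu\qquad(\text{weak*}),
$$
where, $I$ being compact, the limit is over $T\to+\infty$ with $(T,\theta)\notin\mathcal Z$. For $\theta\in I$ set $Z_\theta:=\{T\ge1:(T,\theta)\in\mathcal Z\}$, viewed as a subset of $\R$; for each fixed $\theta$ the displayed limit specializes to $\lim_{T\to+\infty,\,T\notin Z_\theta}\frac{1}{T}\int_0^T(g_t\circ r_\theta)_*(\delta_x)\,dt=\mu$. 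By Fubini,
$$
\pi_n(\mathcal Z)=\frac{1}{|I|}\int_I\frac{\mathrm{Leb}(Z_\theta\cap[1,T_n])}{T_n-1}\,d\theta\longrightarrow 0 .
$$
Choosing a subsequence $(n_k)$ with $\sum_k\pi_{n_k}(\mathcal Z)<\infty$ and applying monotone convergence, for Lebesgue-almost every $\theta\in I$ one gets $\mathrm{Leb}(Z_\theta\cap[1,T_{n_k}])/T_{n_k}\to0$; taking $R=T_{n_k}$ in the definition of density yields $\liminf_{R\to+\infty}\mathrm{Leb}(Z_\theta\cap[1,R])/R=0$, i.e.\ $Z_\theta$ has zero lower density. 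Combined with the weak* convergence above, this is exactly the conclusion of the corollary.

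The main obstacle is this last passage: Theorem~\ref{thm:BG} constrains the product set $\mathcal Z$ only through the vanishing numbers $\pi_n(\mathcal Z)$, so one has to arrange the $\pi_n$ so that $\pi_n(\mathcal Z)$ encodes a genuine \emph{density} of the slices $Z_\theta$ (hence the uniform averaging of $\mu_n$ over $[1,T_n]$, rather than a logarithmic or other weighting), and then turn this integrated bound into an almost-everywhere statement via a Borel--Cantelli-type subsequence extraction. It is precisely this extraction that forces the conclusion to be zero \emph{lower} density, rather than zero upper density, in each direction.
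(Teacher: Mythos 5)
Your argument is correct, but it takes a genuinely different route from the paper's. You apply Theorem~\ref{thm:BG} once, to the single concrete family $\pi_n=\mathrm{unif}[1,T_n]\otimes\Theta_I$, and then exploit the full strength of its conclusion: you slice the global exceptional set $\mathcal Z$ into fibers $Z_\theta$, use Fubini to convert $\pi_n(\mathcal Z)\to 0$ into an integrated density bound, and a Borel--Cantelli/subsequence extraction to get zero lower density of $Z_\theta$ for a.e.\ $\theta$, the weak* limit off $Z_\theta$ being just the restriction of the joint limit (and in fact uniform in $\theta$). The paper instead only uses the weaker consequence that, for \emph{every} test family $\tau_n\times\Theta_I$ with $\tau_n\to\delta_\infty$, the measure of the bad set $\{G(T,\theta)\notin\mathcal V\}$ tends to zero; it then proves the a.e.\ statement by contradiction, via Egorov and the construction of a contradicting sequence $(\tau_n)$, and finally builds $Z_\theta$ separately for each good $\theta$ by a recursive choice of scales over a neighborhood basis of $\mu$. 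Your version is shorter and yields the exceptional sets as measurable slices of one set; the paper's version is insensitive to how the set $\mathcal Z$ of Theorem~\ref{thm:BG} is produced and sidesteps any measurability discussion, since it never integrates over $\mathcal Z$. Two small points you should make explicit: measurability of $\mathcal Z$ is not asserted in the statement of Theorem~\ref{thm:BG}, but it follows from its proof (on each strip $[T_k,T_{k+1})\times I$ the set is the preimage under the continuous map $G$ of the complement of $\mathcal V_k$), or can be replaced by a common measurable hull for the countably many $\pi_{n_k}$; and the ergodicity of $\mu$ under $g_\R$, which the statement of Theorem~\ref{thm:BG} omits but whose proof uses, is indeed supplied by the corollary's hypothesis, as you note.
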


Corollary~\ref{cor:BG1} has been recently proven by O.~Khalil (see~\cite{Kha}, Th. 1.1) in greater generality by a different, direct argument. Khalil's argument is
based on an ``adaptation of the weak-type maximal inequality  and follows similar lines to the proof of the classical Birkhoff ergodic theorem''. We believe that our indirect argument can be generalized to yield a result identical to that of Khalil.

\smallskip
In the motivating case when $X={\mathcal H}_g$ is the moduli space of Abelian differentials on Riemann surfaces of genus
$g\geq 2$, the results of Eskin, Mirzakhani and Mohammadi, \cite{EM} and \cite{EMM}, prove that for every
$x \in {\mathcal H}_g$ there exists a unique probability $\SL(2, \R)$-measure, absolutely continuous on the
affine orbifold $\overline {SL(2,\R)x}$, such that the hypotheses of Theorem~\ref{thm:HP} and Corollary
\ref{cor:BG1} hold (see \cite{EMM}, Theorems 2.6 and 2.10).  
In this case the Birkhoff genericity in almost all directions, which corresponds to the statement of  Corollary~\ref{cor:BG1} with exceptional sets
$Z_\theta= \emptyset$, for almost all $\theta\in \T$,  was proved by Chaika and Eskin (see \cite{CE}, Theorem 1.1) by a different methods based 
on ideas and results  from~\cite{EM} and~\cite{EMM}. 

\smallskip
By our method, we can establish Birkhoff genericity in almost all directions in an abstract setting only under a
stronger hypothesis.

\begin{corollary} \label{cor:BG2}  Let $x\in X$ and  let $I \subset \T$.  If the weak* limit
$$
\mu:=\lim_{\pi \to \delta^I_{\infty}} \int_{S_I} \frac{1}{T}  \int_0^T  (g_t\circ r_\theta)_* (\delta_x)dt   d\pi (T, \theta)
$$
exists as $\pi$ varies over all compactly supported, absolutely continuous probability measure on $S_I$ with smooth density,  
and if $\mu$ is $g_\R$-ergodic, then  for almost all $\theta \in \T$ we have the weak* limit
$$
\mu:=\lim_{T\to+\infty}  \frac{1}{T} \int_0^T (g_t\circ r_\theta)_* (\delta_x) dt  \,.
$$
\end{corollary}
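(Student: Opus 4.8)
The plan is to argue by contradiction, turning a failure of pointwise convergence on a positive-measure set of directions into an \emph{admissible} averaging family for which Theorem~\ref{thm:BG} produces an impossible conclusion. Throughout write $A_T(\theta):=\frac1T\int_0^T(g_t\circ r_\theta)_*(\delta_x)\,dt$, let $\widehat X$ be the one-point compactification of $X$ (which we take metrizable, as in all applications; otherwise one replaces the metric below by a countable family of test functions, at the cost of a mild separability hypothesis), and set $d(\lambda,\lambda'):=\sum_{k\ge1}2^{-k}\bigl|\int f_k\,d\lambda-\int f_k\,d\lambda'\bigr|$ for a sequence $(f_k)$ dense in the unit ball of $C(\widehat X)$, a metric for the weak* topology satisfying $d\le\|\cdot\|_{TV}$. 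Two soft facts are used repeatedly: (i) $(T,\theta)\mapsto A_T(\theta)$ is jointly continuous into $(\mathcal P(\widehat X),d)$; and (ii) the near-continuity in logarithmic time, $\|A_{T'}(\theta)-A_T(\theta)\|_{TV}\le 2\bigl(1-\tfrac{T}{T'}\bigr)\le 2(\log T'-\log T)$ for $1\le T\le T'$, whence $d\bigl(A_T(\theta),A_{T'}(\theta)\bigr)\le 2\,|\log T'-\log T|$ uniformly in $\theta$.

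First I would set up the contradiction. If the conclusion fails, the set of $\theta$ with $\limsup_{T\to+\infty}d(A_T(\theta),\mu)>0$ has positive measure; splitting it according to the value $1/m$ of this $\limsup$ and using inner regularity of Lebesgue measure, one obtains $\varepsilon>0$ and a \emph{compact} set $E$ of positive measure with $\limsup_{T\to+\infty}d(A_T(\theta),\mu)>3\varepsilon$ for all $\theta\in E$. By near-continuity, whenever $d(A_{T_0}(\theta),\mu)>3\varepsilon$ one has $d(A_T(\theta),\mu)\ge 2\varepsilon$ for all $T$ with $|\log T-\log T_0|\le\varepsilon/2$; since for $\theta\in E$ such $T_0$ exist arbitrarily far out, the set
$$
\mathcal G_n:=\bigl\{(T,\theta)\in[e^n,+\infty)\times E:\ d(A_T(\theta),\mu)\ge 2\varepsilon\bigr\}
$$
is Borel (by joint continuity), and for each $\theta\in E$ its $\theta$-slice contains intervals of definite logarithmic length reaching to $+\infty$; by Fubini $\mathcal G_n$ has positive Lebesgue measure, and one may pick $R_n\uparrow+\infty$ so that $\mathcal G_n\cap\{T\le R_n\}$ is compact of positive measure.

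Next I would build the averaging family. Let $\pi_n^0$ be normalized Lebesgue measure on $\mathcal G_n\cap\{T\le R_n\}$ and let $\pi_n$ be its mollification by a smooth bump of width $\tau_n\downarrow0$, the width chosen small enough — separately for each $n$ — that $\pi_n$ is a compactly supported, absolutely continuous probability measure on $S_I$ with smooth density, that $d\bigl(A_T(\theta),\mu\bigr)\ge\tfrac32\varepsilon$ on $\operatorname{supp}\pi_n$, and that $\int A_T(\theta)\,d\pi_n$ and $\int A_T(\theta)\,d\pi_n^0$ are within $\varepsilon$ in $d$; the last two points use that on the bounded range $T\le R_n+1$ the family $\{(T,\theta)\mapsto A_T(\theta)\}$ is uniformly continuous, which is where the continuity of the $\SL(2,\R)$-action enters. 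Since $\mathcal G_n\subset\{T\ge e^n\}$, the $\pi_n$ escape every compact subset of $[1,+\infty)\times I$, i.e. $\pi_n\to\delta^I_\infty$; hence the standing hypothesis (the net of such averages converges to $\mu$) forces $\mu=\lim_n\int A_T(\theta)\,d\pi_n$, and Theorem~\ref{thm:BG} applies to the sequence $(\pi_n)$ (using that $\mu$ is $g_\R$-ergodic): there is $\mathcal Z\subset[1,+\infty)\times I$ with $\pi_n(\mathcal Z)\to0$ and $A_T(\theta)\to\mu$ off $\mathcal Z$ as $(T,\theta)\to\infty$. The latter says that $\{(T,\theta)\notin\mathcal Z:\ d(A_T(\theta),\mu)\ge\tfrac32\varepsilon\}$ is relatively compact, hence contained in $[1,N]\times I$ for some $N$. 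But $\operatorname{supp}\pi_n\subset\{d(A_T(\theta),\mu)\ge\tfrac32\varepsilon\}\cap\{T\ge e^n\}$, so for every $n$ with $e^n\ge N$ we get $\operatorname{supp}\pi_n\subset\mathcal Z$, i.e. $\pi_n(\mathcal Z)=1$, contradicting $\pi_n(\mathcal Z)\to0$. This proves $A_T(\theta)\to\mu$ for almost every $\theta$; passing from a.e.\ $\theta\in I$ to a.e.\ $\theta\in\T$ is immaterial, since hypothesis and conclusion are equivariant under rotating the pair $(x,I)$. (One can also avoid Theorem~\ref{thm:BG}: after reducing $\mathcal G_n$ to the level set of a single test function $\varphi$ with $\int\varphi\,dA_T(\theta)\ge\int\varphi\,d\mu+\eta$ there, the barycenter $\int A_T(\theta)\,d\pi_n$ still integrates $\varphi$ to a value $\ge\int\varphi\,d\mu+\tfrac{\eta}{2}$, directly contradicting $\int A_T(\theta)\,d\pi_n\to\mu$.)

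The conceptual heart is the reduction in the second paragraph: securing a \emph{uniform} gap $3\varepsilon$ on a positive-measure set and then, via the total-variation/logarithmic-time Lipschitz bound, fattening the ``bad'' parameters into a set large enough to carry a probability measure. The genuinely delicate — though routine — step I expect to be the main obstacle is the construction of $\pi_n$: manufacturing out of the merely measurable set $\mathcal G_n$ an admissible family (compactly supported, absolutely continuous, smooth density) that nevertheless escapes to infinity and stays concentrated where $A_T(\theta)$ is bounded away from $\mu$. All the measure-theoretic bookkeeping (inner regularity, a mollification scale adapted to the compact time-window $T\le R_n$, the joint and uniform continuity estimates) lives there; the rest of the argument is soft.
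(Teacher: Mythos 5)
Your proposal is correct and follows essentially the same route as the paper's own proof: contraposition on a positive-measure set of bad directions, the near-continuity bound $\vert [G(T+h,\theta)-G(T,\theta)](f)\vert \le 2\tfrac{h}{T}\vert f\vert_{L^\infty}$ to fatten bad times into intervals of proportional length, and a sequence of compactly supported, smooth-density measures $\pi_n \to \delta^I_\infty$ concentrated on the bad set, contradicting Theorem~\ref{thm:BG}. The only differences are matters of implementation: you supply the measurability, Fubini and mollification details that the paper merely asserts (and you do not need the paper's extra requirement that the conditionals on $\T$ be Lebesgue), so this is a refinement of, not a departure from, the paper's argument.
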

\medskip

Our results on the {\it Oseledets theorem} in the generic direction are formulated below. We consider an action the group $\SL(2, \R)$ on a continuous vector bundle in the setting of the paper by C.~Bonatti, A. Eskin and A.~Wilkinson \cite{BEW}.  We recall briefly the main hypothesis of their work.  Let $H \to X$ be a continuous vector bundle over a separable metric space $X$  with fiber a finite dimensional vector space.  
Suppose that $\SL(2,\R)$ acts on $H$ by linear automorphisms on the fibers and a given  action on the base which preserves a probability measure $\mu$ on $X$. 
Assume that $H$ is equipped with a Finsler structure (that is, a continuous choice of norm $\vert \cdot \vert_x$ on the fibers of $H$). For any $g\in \SL(2, \R)$ let 
$$
\Vert  g  \Vert_x =  \sup_{ \rm v\in H_x\setminus\{0\}}   \frac  {\vert  g(\rm v) \vert_{g(x)} }{ \vert \rm v \vert_x } \,.
$$
The cocycle is called {\it uniformly Lipschitz}  with respect to the Finsler structure,  if there exists a constant $K>0$ such that for $(x,t) \in X\times \R$, 
$$
\log \Vert  g_t \Vert_x    \leq   K t   \,.
$$
We remark that all uniformly Lipschitz cocycles trivially satisfy the integrability condition of \cite{BEW}:
$$
\int_X  \sup_{t\in [-1, 1]} \log \Vert  g_t \Vert_x \, d\mu(x)     \, < \,+ \infty.
$$
The cocycle is called \emph{irreducible}  with respect to the $\SL(2, \R)$-invariant measure $\mu$ on $X$ if it does not admit non-trivial  $\mu$-measurable 
$\SL(2,\R)$-invariant sub-bundles.

Let $I\subset \T$ be a compact subinterval.   Let $\delta^I_\infty$ denote the Dirac mass at the point at infinity of the one-point compactification $S_I$ of the 
semi-infinite cylinder $[1, +\infty) \times I$.  

\begin{theorem} 
\label{thm:OG}
Assume that the $\SL(2,\R)$ cocycle on $H$ is uniformly Lipschitz and irreducible with respect to the $\SL(2, \R)$-invariant probability ergodic measure $\mu$ on $X$. Let $\lambda_\mu$ denote
the top Lyapunov exponent of the diagonal cocycle $g^H_\R$ on $H$ with respect to the measure $\mu$ on $X$.  
For any sequence $(\pi_n)$ of probability measures converging to the Dirac measure $\delta^I_\infty$, the following holds. 
Let $x\in X$ be any point such that (in the weak* topology) we have 
$$
\mu:=\lim_{n\to+\infty} \int_{S_I} \frac{1}{T}  \int_0^T  (g_t\circ r_\theta)_* (\delta_x)dt   d\pi_n (T, \theta)\,.
$$
It follows that  there exists a set ${\mathcal Z }\subset [1, +\infty) \times I$ with $\lim_{n\to +\infty}\pi_n ({\mathcal Z}) =0$
such that, for all ${\rm v} \in H_x\setminus \{0\}$, we have
$$
\lim_{(t,\theta) \not\in {\mathcal Z}}  \frac{1}{t}  \log  \vert  g^H_t (r_\theta(\rm v) ) \vert_{g_t (r_\theta(x))}  = \lambda_\mu.
$$
\end{theorem}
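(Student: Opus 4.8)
The plan is to transfer the ``lift a family of measures on a compact space to measures on the space of probability measures, then use the extremality of ergodic measures'' argument behind Theorem~\ref{thm:BG} to the projectivised bundle $\Proj(H)\to X$, with $\log$ of the norm cocycle in the role of the test function. Since the cocycle is linear, $\frac1t\log\vert g^H_t({\rm v})\vert$ is insensitive to rescaling of ${\rm v}$ up to $O(1/t)$, so it is enough to fix a unit vector ${\rm v}_0\in H_x$ and set $p_0=(x,[{\rm v}_0])\in\Proj(H)$. On $\Proj(H)$, which carries the lifted $\SL(2,\R)$-action, let $c(g,(y,[{\rm w}]))=\log\bigl(\vert g^H({\rm w})\vert_{gy}/\vert{\rm w}\vert_y\bigr)$ be the continuous additive norm cocycle; by the uniformly Lipschitz hypothesis $\vert c(g_t,\cdot)\vert\le K\vert t\vert$, and $\phi:=c(g_1,\cdot)$ is a bounded continuous function on $\Proj(H)$ with $\phi(y,[{\rm w}])\le\log\Vert g^H_1\Vert_y$. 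The cocycle identity then gives
$$
\frac1t\log\vert g^H_t(r_\theta({\rm v}_0))\vert_{g_t(r_\theta(x))}=\int_{\Proj(H)}\phi\,d\hat\nu_{t,\theta}+O(1/t),\qquad \hat\nu_{t,\theta}:=\frac1t\int_0^t\delta_{g_s\,(r_\theta p_0)}\,ds,
$$
so it is enough to show that $F(t,\theta):=\int\phi\,d\hat\nu_{t,\theta}$ tends to $\lambda_\mu$ as $(t,\theta)\to\infty$ off a set ${\mathcal Z}$ with $\pi_n({\mathcal Z})\to0$.

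First I would get the upper bound, which is essentially for free from Theorem~\ref{thm:BG}. Off a set ${\mathcal Z}_0$ with $\pi_n({\mathcal Z}_0)\to0$, the base orbit $s\mapsto g_s(r_\theta x)$ equidistributes towards $\mu$; since $t\mapsto\log\Vert g^H_t\Vert_y$ is a subadditive cocycle with each $\log\Vert g^H_t\Vert_\cdot$ continuous and bounded by $K\vert t\vert$, averaging over the starting time and using this equidistribution yields $\limsup_t\frac1t\log\Vert g^H_t\Vert_{r_\theta x}\le\lambda_\mu$, hence $F(t,\theta)\le\lambda_\mu+o(1)$ off ${\mathcal Z}_0$. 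For the matching lower bound I would push $\pi_n$ forward under $(T,\theta)\mapsto\hat\nu_{T,\theta}$ to probability measures $\Pi_n$ on ${\mathcal P}(\Proj(H))$ and pass to a weak* limit $\Pi$ (using compactness of $\Proj(H)$ over the relevant part of $X$, or a no-escape-of-mass argument); exactly as in the proof of Theorem~\ref{thm:BG} the barycenter $\hat\mu$ of $\Pi$ is invariant under the upper-triangular group $P=\{g_th_s\}$, and by the hypothesis of the theorem it projects to $\mu$.

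The crux is the use of irreducibility to identify $\int\phi\,d\hat\mu$. Decomposing $\hat\mu$ into $P$-ergodic components, each projects to a $P$-ergodic measure on $X$, their barycenter is $\mu$, and $\mu$ is $\SL(2,\R)$-ergodic, hence $P$-ergodic (Moore), hence an extreme point, so every $P$-ergodic component of $\hat\mu$ projects to $\mu$. By the results of Bonatti--Eskin--Wilkinson \cite{BEW} on $P$-invariant measures on projective bundles, a $P$-invariant ergodic measure projecting to the $\SL(2,\R)$-ergodic $\mu$ is, when the cocycle is irreducible, in fact $\SL(2,\R)$-invariant; and since no invariant sub-bundle can carry the support of an $\SL(2,\R)$-invariant measure, the Lyapunov exponent realised by such a measure must be the top one $\lambda_\mu$. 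Hence $\int\phi\,d\hat\mu=\lambda_\mu$, so $\int F\,d\pi_n\to\int\phi\,d\hat\mu=\lambda_\mu$. Writing $F=\lambda_\mu-G$ with $G\ge-o(1)$ off ${\mathcal Z}_0$, this forces $\int G\,d\pi_n\to0$, hence by Markov's inequality $\pi_n(\{F\le\lambda_\mu-\varepsilon\})\to0$ for every $\varepsilon>0$; combined with the upper bound and the same extraction as in Theorem~\ref{thm:BG} this produces a single ${\mathcal Z}$ with $\pi_n({\mathcal Z})\to0$ along whose complement $F(t,\theta)\to\lambda_\mu$, which is the assertion.

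The main obstacle is the step invoking \cite{BEW}: one must know that the $P$-invariant measures on $\Proj(H)$ produced by the construction give no mass to slow Oseledets directions, and irreducibility is genuinely needed here — if $H$ admitted an invariant sub-bundle $L$ with exponent $\lambda_L<\lambda_\mu$, then for ${\rm v}_0\in L_x$ one would have $r_\theta({\rm v}_0)\in L_{r_\theta(x)}$ for \emph{every} $\theta$ and $\frac1t\log\vert g^H_t(r_\theta({\rm v}_0))\vert\to\lambda_L$ identically, so the conclusion would fail. Converting ``no invariant sub-bundle'' into the required rigidity of $P$-invariant measures on the projective bundle is precisely the hard content of Bonatti--Eskin--Wilkinson \cite{BEW}; the remaining ingredients (the subadditive upper bound, the lift, the extremality and Markov estimates) are soft by comparison. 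Technical points that still need care: establishing the $P$-invariance of $\hat\mu$ exactly as in Theorem~\ref{thm:BG}, the passage between the continuous-time cocycle and its time-one discretisation with control of the $O(1)$ errors, and escape of mass when $X$, hence $\Proj(H)$, is merely locally compact.
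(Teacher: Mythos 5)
Your proposal is correct and follows essentially the same route as the paper: lift the time averages to (measures on) the projectivized bundle $\Proj^1(H)$, invoke the Bonatti--Eskin--Wilkinson theorem \cite{BEW} on $P$-invariant measures projecting to $\mu$ under irreducibility to force the average of the log-norm cocycle to equal $\lambda_\mu$, bound it above by $\lambda_\mu$ for the competing invariant measures/orbits, and extract $\mathcal Z$ by the same neighborhood-basis argument as in Theorem~\ref{thm:BG}. Your substitutions --- a single time-one cocycle function $\phi$ with $O(1/t)$ error in place of the family $\sigma_\ell$, a subadditive upper bound along orbits off a $\pi_n$-small set in place of the inequality \eqref{eq:exp_ineq} applied on the support of the lifted limit, and a barycenter-plus-Markov estimate in place of that support argument --- are cosmetic reorganizations of the paper's proof (and you share with the paper the essentially unproved assertion that the lifted limit is $P$-invariant, not merely $g_\R$-invariant, for arbitrary sequences $\pi_n$).
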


Theorem \ref{thm:OG} has the corollaries stated below.

\begin{corollary} \label{cor:OG1} 
Assume that the $\SL(2,\R)$ cocycle on $H$ is uniformly Lipschitz and irreducible with respect to the $\SL(2, \R)$-invariant probability ergodic measure $\mu$ on $X$. Let $\lambda_\mu$ denote the top Lyapunov exponent of the diagonal cocycle $g^H_\R$ on $H$ with respect to the measure $\mu$ on $X$.  Let $x\in X$ 
be any point such that (in the weak* topology) we have 
$$
\mu:=\lim_{T\to +\infty} \frac{1}{T}  \int_0^T  \frac{1}{\vert I\vert} \int_I   (g_t\circ r_\theta)_* (\delta_x)d\theta dt 
$$
It follows that, for Lebesgue almost all $\theta \in I$,  there exists a set $Z_\theta \subset \R$  of zero lower density such that,
for all ${\rm v} \in H_x\setminus \{0\}$, we have
$$
\lim_{t \not\in Z_\theta}  \frac{1}{t}  \log  \vert  g^H_t (r_\theta(\rm v) ) \vert_{g_t (r_\theta(x))}  = \lambda_\mu.
$$
\end{corollary}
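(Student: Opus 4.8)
\textbf{Proof proposal for Corollary~\ref{cor:OG1}.}

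The plan is to deduce the corollary from Theorem~\ref{thm:OG} by the same mechanism that passes from Theorem~\ref{thm:BG} to Corollary~\ref{cor:BG1}: feed Theorem~\ref{thm:OG} a convenient approximating sequence $(\pi_n)$, then disintegrate the resulting exceptional set in the $\theta$-variable using Fubini and Fatou. Concretely, for each integer $n\ge 2$ take the probability measure $\pi_n$ on $S_I$ obtained by normalizing Lebesgue measure on the rectangle $[1,n]\times I$, i.e.\ $d\pi_n(T,\theta)=\big((n-1)\,|I|\big)^{-1}\mathbf{1}_{[1,n]\times I}(T,\theta)\,dT\,d\theta$. Since every $\phi\in C(S_I)$ tends to its value at the point at infinity as $T\to+\infty$, uniformly in $\theta\in I$, one checks at once that $\pi_n\to\delta^I_\infty$ in the weak* sense (indeed $\pi_n(K)\le(M-1)/(n-1)\to 0$ for every compact $K\subset[1,M]\times I$). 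Writing $A_T:=\frac1T\int_0^T\frac1{|I|}\int_I(g_t\circ r_\theta)_*(\delta_x)\,d\theta\,dt$ for the average appearing in the hypothesis of the corollary, Fubini's theorem gives
$$
\int_{S_I}\frac1T\int_0^T(g_t\circ r_\theta)_*(\delta_x)\,dt\;d\pi_n(T,\theta)=\frac1{n-1}\int_1^n A_T\,dT .
$$
Since $A_T\to\mu$ weak* by hypothesis and Cesàro averaging in $T$ preserves weak* limits, the right-hand side converges weak* to $\mu$; hence the hypothesis of Theorem~\ref{thm:OG} holds for $(\pi_n)$, and the theorem produces a Borel set $\mathcal Z\subset[1,+\infty)\times I$ with $\pi_n(\mathcal Z)\to 0$ such that, for every $\varepsilon>0$, there is $T_\varepsilon$ with $\big|\frac1t\log|g^H_t(r_\theta(v))|_{g_t(r_\theta(x))}-\lambda_\mu\big|<\varepsilon$ whenever $t\ge T_\varepsilon$, $(t,\theta)\notin\mathcal Z$, and $v\in H_x\setminus\{0\}$; this reformulation of the conclusion of Theorem~\ref{thm:OG} uses that $I$ is compact, so leaving compacts of $[1,+\infty)\times I$ amounts to $t\to+\infty$.

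Now pass to $\theta$-slices. Set $\mathcal Z_\theta:=\{t\ge 1:(t,\theta)\in\mathcal Z\}$ and $f_n(\theta):=|\mathcal Z_\theta\cap[1,n]|/(n-1)\in[0,1]$; Fubini gives $\pi_n(\mathcal Z)=|I|^{-1}\int_I f_n\,d\theta$, so $\int_I f_n\,d\theta\to 0$. By Fatou's lemma $\int_I\liminf_n f_n\,d\theta\le\liminf_n\int_I f_n\,d\theta=0$, and since $f_n\ge 0$ this forces $\liminf_n f_n(\theta)=0$ for Lebesgue-a.e.\ $\theta\in I$. For any such $\theta$, setting $Z_\theta:=\mathcal Z_\theta$, there is a sequence $n_k\to+\infty$ with $|Z_\theta\cap[1,n_k]|/n_k\to 0$, so $Z_\theta$ has zero lower density in $\R$; and by the reformulation established above, for every $v\in H_x\setminus\{0\}$ we obtain $\lim_{t\notin Z_\theta}\frac1t\log|g^H_t(r_\theta(v))|_{g_t(r_\theta(x))}=\lambda_\mu$, which is exactly the assertion of the corollary.

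The real content is Theorem~\ref{thm:OG} itself, which we are assuming; the remaining steps requiring care are (i) verifying that Cesàro averaging in $T$ converts the corollary's hypothesis $A_T\to\mu$ into the hypothesis of Theorem~\ref{thm:OG} for the chosen $(\pi_n)$, and (ii) checking that compactness of $I$ lets the ``limit at infinity away from $\mathcal Z$'' of Theorem~\ref{thm:OG} descend to a genuine $t\to+\infty$ limit on each fixed $\theta$-slice, with the same exceptional slice $Z_\theta$ and the same value $\lambda_\mu$ for all $v\neq 0$. The passage from $L^1$-smallness of the slice densities $f_n$ to zero lower density of $Z_\theta$ for a.e.\ $\theta$ is then an immediate application of Fatou and needs no common subsequence in the $\theta$-variable.
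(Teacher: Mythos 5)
Your proposal is correct, but it reaches the conclusion by a genuinely different route from the paper. The paper's proof of Corollary~\ref{cor:OG1} applies Theorem~\ref{thm:OG} to \emph{every} product sequence $\pi_n=\tau_n\times\Theta_I$ with $\tau_n\to\delta_\infty$, extracts from this the statement that for each $\delta>0$ the $(\tau_n\times\Theta_I)$-measure of the bad set $\{(t,\theta):\frac1t\log\vert g^H_t(r_\theta(\mathrm v))\vert\notin(\lambda_\mu-\delta,\lambda_\mu+\delta)\}$ tends to zero, and then argues by contradiction (via Egorov and a purpose-built sequence $(\tau_n)$) that for a.e.\ $\theta$ the time-density of bad times has liminf zero for each fixed $\delta$; finally it assembles $Z_\theta$ by a recursive choice of thresholds $\mathcal T_n$ and a union over windows $[\mathcal T_n,\mathcal T_{n+1}]$ with shrinking tolerances $\delta_n$, checking zero lower density by hand. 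You instead apply Theorem~\ref{thm:OG} \emph{once}, to the single sequence of normalized Lebesgue measures on $[1,n]\times I$ (whose admissibility you verify correctly: the Fubini identity reduces the theorem's hypothesis to a Ces\`aro average of the corollary's hypothesis), obtain one exceptional set $\mathcal Z$, and then disintegrate it: Fubini turns $\pi_n(\mathcal Z)\to0$ into $L^1$-smallness of the slice densities $f_n(\theta)$, Fatou gives $\liminf_n f_n(\theta)=0$ a.e., and the slice $Z_\theta=\mathcal Z_\theta$ itself serves as the exceptional set, with zero lower density read off along the $\theta$-dependent subsequence $n_k$ (no common subsequence in $\theta$ is needed, as you observe), while compactness of $I$ converts the theorem's limit away from $\mathcal Z$ into the $t\to+\infty$ limit on each slice. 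Your argument is shorter and avoids both the Egorov/contradiction step and the recursive construction of $Z_\theta$; the paper's argument, quantifying over all approximating sequences, produces intermediate statements (a.e.\ vanishing liminf of bad-time density for each fixed accuracy $\delta$) and reuses verbatim the machinery of its Corollary~\ref{cor:BG1}. One minor caveat: your restatement of Theorem~\ref{thm:OG} makes the threshold $T_\varepsilon$ uniform in ${\rm v}\in H_x\setminus\{0\}$, which is a slightly stronger reading than the literal ``for all ${\rm v}$, $\lim_{(t,\theta)\notin\mathcal Z}$'' of the theorem; this is harmless, since the per-${\rm v}$ reading (with $\mathcal Z$, hence $Z_\theta$, independent of ${\rm v}$) already yields exactly the conclusion of the corollary.
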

In the motivating case when $X={\mathcal H}_g$ is the moduli space of Abelian differentials on Riemann surfaces of genus
$g\geq 2$,  Oseldets genericity in almost all directions, which corresponds for the top exponent to the statement of 
Corollary~\ref{cor:OG1} with exceptional sets $Z_\theta= \emptyset$,  for almost all $\theta\in \T$, was also proved by Chaika and Eskin 
(see \cite{CE}, Theorems 1.2 and 1.5)  by an argument based on~\cite{EM},~\cite{EMM}. 

\smallskip
By our method, we can establish Oseledets regularity in almost all directions in an abstract setting only under a
stronger hypothesis.
\begin{corollary} \label{cor:OG2}  Let $x\in X$ and  let $I \subset \T$.  If the weak* limit
$$
\mu:=\lim_{\pi \to \delta^I_{\infty}} \int_{S_I} \frac{1}{T}  \int_0^T  (g_t\circ r_\theta)_* (\delta_x)dt   d\pi (T, \theta)
$$
exists, over all compactly supported, absolutely continuous probability measure $\pi$ on $S_I$ with smooth density,  
and is $g_\R$-ergodic, then  for almost all $\theta \in \T$ and for all ${\rm v} \in H_x\setminus \{0\}$, we have
$$
\lim_{t \to +\infty}  \frac{1}{t}  \log  \vert  g^H_t (r_\theta(\rm v) ) \vert_{g_t (r_\theta(x))}  = \lambda_\mu.
$$
\end{corollary}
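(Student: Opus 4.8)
The plan is to deduce Corollary~\ref{cor:OG2} from Theorem~\ref{thm:OG} by the scheme that passes from Theorem~\ref{thm:BG} to Corollary~\ref{cor:BG2}, run now ``for a.e.\ $\theta$'' rather than ``for $\pi_n$-a.e.\ $(T,\theta)$''. First I would observe that the hypothesis of Corollary~\ref{cor:OG2}, being a statement about \emph{all} smooth, compactly supported, absolutely continuous probability measures $\pi$ on $S_I$ with $\pi\to\delta^I_\infty$, implies that for \emph{every} sequence $(\pi_n)$ of such measures with $\pi_n\to\delta^I_\infty$ the weak* limit in Theorem~\ref{thm:OG} exists and equals the given $\mu$; since $\mu$ is $\SL(2,\R)$-invariant (a general feature of such Ces\`aro geodesic averages of circle-arc push-forwards) and $g_\R$-ergodic by hypothesis, hence $\SL(2,\R)$-ergodic, and the cocycle is uniformly Lipschitz and irreducible, Theorem~\ref{thm:OG} applies to every such $(\pi_n)$ and produces $\mathcal{Z}=\mathcal{Z}\big((\pi_n)\big)$ with $\pi_n(\mathcal{Z})\to 0$ along which $\frac1t\log\vert g^H_t(r_\theta(v))\vert_{g_t(r_\theta(x))}\to\lambda_\mu$ as $(t,\theta)\notin\mathcal{Z}$, $t\to+\infty$. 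Testing on a fixed basis of $H_x$ for the largest singular value, and using the uniformity in $v$ of this conclusion for the smallest one, I would record the $v$-free consequence that the jointly continuous function
$$
\Psi(t,\theta):=\Big\vert\,\tfrac1t\log\Vert g^H_t\circ r_\theta\Vert_x-\lambda_\mu\,\Big\vert+\Big\vert\,\tfrac1t\log\Vert(g^H_t\circ r_\theta)^{-1}\Vert_{g_t(r_\theta(x))}+\lambda_\mu\,\Big\vert
$$
tends to $0$ along $(t,\theta)\notin\mathcal{Z}$, $t\to+\infty$. Conversely, $\Psi(t,\theta)\to0$ forces every singular value of $g^H_t\circ r_\theta$ to be $e^{(\lambda_\mu+o(1))t}$, hence $\frac1t\log\vert g^H_t(r_\theta(v))\vert\to\lambda_\mu$ for every $v\ne0$; so it suffices to prove that $\Psi(t,\theta)\to0$ as $t\to+\infty$ for Lebesgue-a.e.\ $\theta$.

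Suppose this fails. Then there are $k\in\N$ and a compact $A\subset I$ of positive measure with $\limsup_{t\to+\infty}\Psi(t,\theta)\ge 1/k$ for every $\theta\in A$; equivalently, the \emph{closed} set $\mathcal{B}:=\{(t,\theta):\Psi(t,\theta)\ge 1/(2k)\}$ meets the fibre over each $\theta\in A$ in an unbounded subset of $[1,+\infty)$. For $n\ge 2$ put $\tau_n(\theta):=\inf\{t\ge n:(t,\theta)\in\mathcal{B}\}$, a lower semicontinuous, everywhere-finite function on $A$; choose $M_n\uparrow+\infty$ so that $A_n:=\{\theta\in A:\tau_n(\theta)\le M_n\}$, shrunk slightly away from $\partial I$, is compact of measure $\ge\frac12|A|$, and push the normalized Lebesgue measure of $A_n$ forward under $\theta\mapsto(\tau_n(\theta),\theta)$ to obtain a probability measure $\sigma_n$ supported in a compact subset of $\mathcal{B}\cap([n,M_n]\times I)$. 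Convolving $\sigma_n$ with a smooth bump of width $\rho_n$ gives a probability measure $\pi_n$ with smooth, compactly supported density; by uniform continuity of $\Psi$ on $[n-1,M_n+1]\times I$ one may choose $\rho_n<1$ small enough that $\operatorname{supp}\pi_n\subset\mathcal{B}':=\{\Psi\ge 1/(3k)\}$, and then $\pi_n\to\delta^I_\infty$ because $\operatorname{supp}\pi_n\subset\{t>n-1\}$.

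Now apply the first paragraph to this sequence $(\pi_n)$: it yields $\mathcal{Z}$ with $\pi_n(\mathcal{Z})\to0$ and, since $\Psi\to0$ along $(t,\theta)\notin\mathcal{Z}$, a bound $T_0$ such that $(t,\theta)\notin\mathcal{Z}$ and $t\ge T_0$ imply $\Psi(t,\theta)<1/(3k)$, i.e.\ $(t,\theta)\notin\mathcal{B}'$; equivalently $\mathcal{B}'\cap\{t\ge T_0\}\subset\mathcal{Z}$. For $n>T_0+1$ this gives $\operatorname{supp}\pi_n\subset\{t>T_0\}\cap\mathcal{B}'\subset\mathcal{Z}$, so $\pi_n(\mathcal{Z})=1$ for all large $n$, contradicting $\pi_n(\mathcal{Z})\to0$. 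Hence $\Psi(t,\theta)\to0$ as $t\to+\infty$ for a.e.\ $\theta$, which is the Corollary.

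The step I expect to be the main obstacle is the $v$-uniform control in the first paragraph: Theorem~\ref{thm:OG} a priori controls $\frac1t\log\vert g^H_t(r_\theta(v))\vert$ only for each \emph{fixed} $v$ (off a common set $\mathcal{Z}$), and one must upgrade this to control of the full singular spectrum of $g^H_t\circ r_\theta$ encoded by $\Psi$. The bound for the largest singular value is elementary, but the one for the smallest singular value --- which is what makes $\frac1t\log\vert g^H_t(r_\theta(v))\vert\to\lambda_\mu$ hold \emph{uniformly} in $v$, hence for all $v$ --- is delicate, and this is exactly where the full force of the irreducibility hypothesis (and of the Bonatti--Eskin--Wilkinson machinery underlying Theorem~\ref{thm:OG}) is used. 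The remaining technical points (lower semicontinuity of $\tau_n$, and the fact that a small smoothing of $\sigma_n$ still lies in a mild enlargement of the closed set $\mathcal{B}$) are routine, resting only on the joint continuity of the cocycle norms in $(t,\theta)$, which holds since the bundle, the $\SL(2,\R)$-action, and the Finsler structure are continuous.
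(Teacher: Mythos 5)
Your contradiction scheme in the second and third paragraphs (first hitting times of a closed bad set, smoothing to produce compactly supported $\pi_n$ with smooth density and $\pi_n\to\delta^I_\infty$, then playing this sequence against the exceptional set $\mathcal Z$ furnished by Theorem~\ref{thm:OG}) is essentially the mechanism of the paper's proof, which likewise argues by contraposition and manufactures such a sequence of smooth measures charging a bad set. The genuine gap is your first paragraph: the reduction to the $v$-free quantity $\Psi$ is not merely unproved, it reduces the corollary to a statement that is false in general. The second term of $\Psi$ demands that the \emph{smallest} singular value of $g^H_t\circ r_\theta$ grow at rate $\lambda_\mu$. Theorem~\ref{thm:OG} gives, off a common set $\mathcal Z$, the limit $\frac1t\log\vert g^H_t(r_\theta({\rm v}))\vert\to\lambda_\mu$ for each fixed ${\rm v}$, but these limits are not uniform in ${\rm v}$, and the infimum over ${\rm v}$ grows at the rate of the \emph{smallest} Lyapunov exponent of the diagonal cocycle with respect to $\mu$. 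Irreducibility under $\SL(2,\R)$ does not collapse the spectrum to $\lambda_\mu$ (the Oseledets subbundles are only $g_\R$-invariant, not $\SL(2,\R)$-invariant; the irreducible pieces of the Kontsevich--Zorich cocycle typically carry several exponents), so whenever $\lambda_{\min}<\lambda_\mu$ one has $\Psi(t,\theta)\to\lambda_\mu-\lambda_{\min}>0$ for a.e.\ $\theta$, and no amount of Bonatti--Eskin--Wilkinson machinery can rescue the step you flag as ``delicate''. The corollary asserts something weaker than conformality at rate $\lambda_\mu$: for each fixed ${\rm v}$ the vector $r_\theta({\rm v})$ avoids the slow Oseledets subspaces for the relevant $\theta$, which is exactly what Theorem~\ref{thm:OG} encodes and exactly what your $\Psi$ throws away.

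The repair is to drop $\Psi$ and run the hitting-time argument directly on the scalar deviation $\vert\frac1t\log\vert g^H_t(r_\theta({\rm v}))\vert_{g_t(r_\theta(x))}-\lambda_\mu\vert$ for the given ${\rm v}$, as the paper does: if it fails to tend to zero on a positive measure set of $\theta$, there are $\epsilon>0$ and diverging times $t_n(\theta)$ with deviation at least $\epsilon$; the uniform Lipschitz hypothesis propagates a deviation of at least $\epsilon/2$ to every $t\in[(1-\delta)t_n(\theta),(1+\delta)t_n(\theta)]$ (this fattening replaces your continuity-of-$\Psi$ smoothing step in keeping the smoothed measures inside the bad set), and one then constructs compactly supported $\pi_n\to\delta^I_\infty$ with smooth density giving the bad set mass bounded away from zero, contradicting Theorem~\ref{thm:OG} applied to that sequence. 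Note that this, like the paper's own proof, yields the full measure set of directions for each fixed ${\rm v}$; the wish for a ${\rm v}$-independent exceptional set is what pushed you toward $\Psi$, but the quantity you chose overshoots and is simply not available.
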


Whenever Theorem \ref{thm:OG} and its corollaries can be applied to all exterior products of the cocycle, it  implies results for all Lyapunov exponents.
In the special case of
the action of $\SL(2, \R)$ on the moduli space of Abelian differentials the theorem can indeed be applied (as in the paper by Chaika and Eskin \cite{CE}) to all exterior powers 
of the Kontsevich--Zorich cocycle by  reduction to the irreducible component thanks to the semi-simplicity theorem of  S.~Filip \cite{Fi}.  The uniform Lipschitz property of the
Kontsevich--Zorich cocycle with respect to the Hodge norm was proved by the author in \cite{Fo} (see also \cite{FMZ}).

\section*{Acknowledgements}

We are very grateful to Barak Weiss and Ronggang Shi for pointing out an error in the first draft of this paper. As a consequence our results on
Birkhoff genericity and Oseledets regularity have been significantly weakened. We are grateful to  D.~Aulicino, A.~Brown,  A.~Eskin, Carlos Matheus and 
R.~Trevi\~no for several discussions about the topics of this paper. This research was supported by the
NSF grant DMS 1600687.

\section{Limits of Geodesic Push-Forwards of Horocycle measures}
\label{sec:LGP} 
  
In this section we prove Theorem~\ref{thm:HP}. 
  
\begin{proof}

We begin explaining the argument in the case of push-forwards of horocycle invariant probability measures. The
other cases can be treated similarly, and require some additional considerations.

\smallskip
Let $B_1$ be the set of all Borel measures of total mass at most one on the one-point compactification $\hat X$ of the locally compact space $X$ and  let $\mathcal N: \R \to B_1$ be the map defined as $\mathcal N(t) = (g_t)_* (\nu)$ for all $t\in \R$. The range of the map $\mathcal N$ is contained in the closed subspace of probability measures, invariant under the (unstable) horocycle 
flow, since we are assuming that $\nu$ is invariant under the (unstable) horocycle flow. 

The space $B_1$, endowed with the topology of the weak* convergence, is a metrizable (separable) compact space.  The map 
$\mathcal N: \R^+ \to B_1$ is clearly continuous.  In fact, every continuous functions on $\hat X$ is bounded, and all measures 
$ (g_t)_* (\nu)$ are probability measures. 
    
 \smallskip
 Let $u_T$ denote the uniform measure on $[0,T]$. The push-forward $\mathcal N_* (u_T)$ defines a Borel probability measure
on the compact metric space $B_1$, that is, for any  continuous function $F$ on the space $B_1$  
(with respect to the weak* topology on $B_1$), we have
$$
 \mathcal N_* (u_T)(F) :=  \frac{1}{T}  \int_0^T   F((g_t)_*(\nu)) dt\,.
$$
Let $U$ denote any weak* limit of $\mathcal N_* (u_T)$ as $T\to +\infty$ in the space of probability measures  on the compact metric
space $B_1$.
We claim that $U$ is a delta mass $\delta_\mu$ at the probability measure $\mu \in B_1$. We remark  that, as $\nu$ is horocycle invariant, since the subset of horocycle invariant probability measures is closed  with respect to the weak* topology, the measure 
$U$ is supported there.  

For any continuous (compactly supported) function $f$ on the locally compact space $X$, the function $F_f$ defined as
$$
F_f (\nu) = \nu (f) \,,  \quad \text{ \rm for all } \nu \in B_1\,,
$$
is continuous with respect to the weak* topology on $B_1$ (by definition of the weak* topology). 
By our assumptions, for all functions $F_f$ we have that
$$
U(F_f) = \mu(f)\,.
$$
In fact, for every continuous function $f$ with compact support on the space $X$ 
we have that
$$
\mathcal N_* (u_T)(F_f) :=  \frac{1}{T}  \int_0^T  [(g_t)_*(\nu)] (f) dt    \to   \mu (f) \,.
$$
Since $\mu$ is ergodic with the respect to the horocycle flow and $U$ is supported on the subset of horocycle invariant 
measures, from the identity
\begin{equation}
\label{eq:convex1}
U(F_f) = \int  F_f(\nu)   dU(\nu)  =  \int  \nu(f)   dU(\nu)   = \mu(f)\,
\end{equation}
 we derive that $\nu=\mu$ for $U$-almost all $\nu \in B_1 $. This in turn implies that the probability measure $U$, as
a probability measure essentially supported on the singleton  $\{\mu\} \subset B_1$, is equal to a Dirac mass $\delta_\mu$. 

It follows that, for every open neighborhood $\mathcal V$ of $\mu$ in the (metric) space $B_1$, we have
\begin{equation}
\label{eq:zero_freq}
\limsup_{T\to +\infty}  \frac{1}{T} \text{ Leb} \{ t \in [0,T] :   (g_t)_*(\nu) \notin \mathcal V\}  =0 \,.
\end{equation}
In fact, let us assume the above statement does not hold. It follows that there exists an open neighborhood $\mathcal V$ of
$\mu$ in $B_1$ and a diverging sequence $\{T_n\}$ such that
$$
\lim_{n\to +\infty}  \frac{1}{T_n} \text{ Leb} \{ t \in [0,T_n] :   (g_t)_*(\nu) \notin \mathcal V\}  =c >0\,.
$$
There exists a continuous non-negative function $F_{\mathcal V}$ on the compact metrizable space $B_1$ such that $F_{\mathcal V}\equiv 1$ on the complement of $\mathcal V$ (a closed set) and $F_{\mathcal V}(\mu)=0$. It follows that 
$$
\frac{1}{T_n} \int_0^{T_n}  F_{\mathcal V} ((g_t)_*(\nu)) dt    \geq    c >0  \,,
$$
hence, for every weak limit $U$ of the sequence of measures $\mathcal N_* (u_{T_n})$,  we have $U(F_\mathcal V) \geq c>0$
while $\delta_\mu (F_{\mathcal V}) =0$, which is a contradiction. 

We conclude that there exists  $Z \subset \R^+$ of zero upper density  such that
\begin{equation}
\label{eq:limit}
\lim_{ t\not \in Z} (g_t)_*(\nu) = \mu \quad \hbox {in the weak* topology}.
\end{equation}
Let $\{\mathcal V_n\}$ be a basis of neighborhoods of the measure $\mu$ in $B_1$.  By formula
\eqref{eq:zero_freq} for every sequence $\{\epsilon_n\}$ of positive numbers, converging to zero, there exists a diverging, 
increasing sequence $\{T_n\}$ such that  
$$
  \sup_{T\geq T_n} \frac{1}{T} \text{ Leb} \{ t \in [0,T] :   (g_t)_*(\nu) \not\in \mathcal V_n\}   \leq  \epsilon_n \,. 
$$
Let $Z$ be the set defined as follows:
$$
Z:=  \cup_{n\in \N}   \{ t \in [T_n,T_{n+1}] :   (g_t)_*(\nu) \not\in \mathcal V_n\} \,.
$$
Let us find under what conditions $Z$ has zero upper density. For any $T>0$ sufficiently large there exists
$n\in \N$ such that $T \in [T_n, T_{n+1}]$. We have
$$
\begin{aligned}
\text{Leb} (Z \cap [0,T]) &\leq  \sum_{k=1}^{n-1} \text{Leb} (Z \cap [T_k, T_{k+1}])  +   \text{Leb} (Z \cap [T_n, T]) 
\\ &\leq   \sum_{k=1}^{n-1} \epsilon_k T_{k+1} +  \epsilon_n T \,.
\end{aligned}
$$
It is therefore enough to choose the sequences recursively so that
$$
\frac{1}{T_n} \sum_{k=1}^{n-1} \epsilon_k T_{k+1}   + \epsilon_n \to   0\,.
$$
It is clear by the definition of the set $Z\subset \R$  that formula \eqref{eq:limit} holds.

\bigskip
For the cases when $\nu$ is a probability measure supported on a horocycle arc or an arc of circle, the argument is
similar but we have to prove that any weak* limit $U$ of $\mathcal N_* (u_T)$ as $T\to +\infty$ is supported on the
subspace of horocycle invariant measures.

 Let $U$ a weak* limit of the measures $\mathcal N_* (u_T)$  with support not contained in the subspace of the horocycle
 invariant measures. There exists a measure $\nu_0 \in \text{supp}(U)$ which is not invariant under the (unstable) horocycle
 flow, hence here exist a function $f_0\in C^0_0 (X)$  and a real number $\tau\not =0$ such that 
$$
\int f_0\circ h_\tau   \, d\nu_0   \not = \int f_0  \, d\nu_0   \,.
$$
Since $\nu_0\in \text{supp}(U)$ and since $B_1$ is a locally convex metrizable space, there exists a closed convex neighborhood $\mathcal C \subset B_1$ such 
that $U(\mathcal C) >0$ and $U (\partial {\mathcal C}) =0$, and by continuity 
$$
\int f_0\circ h_\tau   \, d\nu    \not = \int f_0  \, d\nu_0 \,,  \quad \text{ for all } \nu \in \mathcal C\,.
$$
Let  $\nu_{\mathcal C}$ denote the measure
$$
\nu_{\mathcal C} =  \frac{1}{ U(\mathcal C)}\int_{\mathcal C}  \nu dU(\nu) \, \in \,  {\mathcal C} \,.
$$
We claim that $\nu_{\mathcal C}$ is invariant under the horocycle flow, a contradiction. In fact, since $U$ is a weak*
limit of the family $\{\mathcal N_*(u_T)\}$ there exists a diverging sequence $(T_n)$ such that 
$\mathcal N_*(u_{T_n}) \to U$, and since by assumption that $U (\partial C) =0$, we have
$$
U(\mathcal C) \nu_{\mathcal C} = \lim_{n\to +\infty}    \int_{\mathcal C}  \nu d \mathcal N_*(u_{T_n}) (\nu) \,.
$$
Now, by construction, for all $f\in C^0_0(X)$,  we have 
$$
 {[}\int_{\mathcal C}  \nu d \mathcal N_*(u_{T_n}) (\nu){]}(f\circ h_\tau -f) =\frac{1}{T_n}  \int_0^{T_n} \chi_C ( (g_t)_\ast \nu)  
 \nu (f\circ h_\tau -f)   dt \,.
$$
It is therefore enough to prove that the RHS in the above formula converges to zero. When $\nu$ is the uniformly
distributed measure on a horocycle arc $\{ h_s (x) \vert s\in [0, S]\}$, we have, uniformly with respect to $t\in \R$,
$$
 \begin{aligned} 
 \frac{1}{S}  &\int_0^S (f\circ h_\tau -f) (g_t h_s x)ds = 
  \frac{1}{S} \int_0^S (f\circ h_\tau -f) (  h_{e^{2t}s} \circ g_t x)ds 
  \\& =  \frac{1}{S}  [\int_S^{S+ e^{-2t}\tau}  f(h_{e^{2t}s} \circ g_t x) ds -   \int_0^{e^{-2t}\tau}  f(h_{e^{2t}s} \circ g_t x) ds]
  \to 0\,.
  \end{aligned} 
$$
When $\nu$ is the uniformly distributed measure on a circle arc $\{ r_\theta (x) \vert \theta\in [0, \Theta]\}$, it is
a standard argument that the push-forward of a circle arc can be well approximated by a union of horocycle arcs. We include the argument for completeness. 

There exists a constant $C>0$ such that, for all $\theta\in [-\pi/4, \pi/4]$ and for all $t\in \R$, we have
$$
\text{\rm dist} ( g_t \circ r_\theta,  g_t  \circ h_\theta) \leq    C ( e^t +e^{-t}) \theta^2 \,.
$$
As a consequence, for any $\alpha \in (1/2, 2)$ we can approximate the push forward of a circle arc $\{r_\theta x \vert \theta \in [0, \Theta]\}$ by a union of  at most $\Theta e^{\alpha t}$  push-forwards of horocycle arcs of length $\ell_t \in [e^{-\alpha t}, 2e^{-\alpha t}]$. 
By the above estimate  the error in computing the integral of a continuos function $f$, of unit Lipschitz constant with respect to the
$\SL(2,\R)$ action, will be of size
$$
4\Theta C ( e^t +e^{-t}) e^{-2\alpha t}  \leq   8 C \Theta e^{(1-2\alpha)t}  \to 0 \,.
$$
The claim is thus reduced to prove that for any family of intervals $\{[a_t, b_t] \}$ we have
$$
\lim_{t\to \infty} \frac{1}{\ell_t } \int_{a_t}^{b_t}   (f\circ h_\tau -f) (g_t h_s x  ) ds  =0 \quad \text{ uniformly on } x\in X\,.
$$
The proof of the above limit is straightforward since 
$$
\begin{aligned}
\vert   \frac{1}{\ell_t } \int_{I_t}  & (f\circ h_\tau -f) (g_t h_s x  ) ds \vert  \\ &=   \frac{1}{\ell_t }  \vert \int_{b_t}^{b_t+ e^{-2t}\tau}  f(h_{e^{2t}s} \circ g_t x)ds  -    \int_{a_t}^{a_t+ e^{-2t}\tau} f(h_{e^{2t}s} \circ g_t x) ds  \vert \,,
\end{aligned}
$$
hence, as $\ell_t \geq e^{-\alpha t}$ with $\alpha <2$, it follows that  for any  $f\in C^0_0(X)$, the above averages converge to zero, uniformly with respect to $x \in X$. 

We have thus completed the proof of the claim that $\nu_{\mathcal C}$ is in all cases invariant under the horocycle flow, and since 
$\nu_{\mathcal C} \in \mathcal C$ we have reached a contradiction. 

\end{proof}

\section{Birkhoff genericity in almost all directions}
\label{sec:BG}

In this section we prove Theorem~\ref{thm:BG} and Corollaries~\ref{cor:BG1} and~\ref{cor:BG2}.

\begin{proof}[Proof of Theorem~\ref{thm:BG}]
Given $x\in X$, let us consider the map $G: [1, +\infty) \times I \to B_1$  the  to the space of measures  on $\hat X$ of total mass at most $1$, 
given for every $T \geq 1$, for every  $\theta \in I\subset \T$, and for every $f \in C^0(\hat X)$ by the formula
$$
G (T,\theta) (f) :=  \frac{1}{T}  \int_0^T   f (g_t r_\theta x) dt \,.
$$
For any compact interval $I \subset \T$, let us consider the family of push-forwards
$$
\{G_* ( \pi),  \text{ for all probability measure } \pi \text{ on } [1, +\infty) \times I \}\,.
$$
Let $\Pi$ be any weak* limit of the above family in the following sense.  There exists a sequence $(\pi_n)$ 
which converges in the weak* topology to the delta mass at the one-point compactification $S_I$ of the  cylinder
$[1, +\infty) \times I$ such that
$$
 G_* ( \pi_n)  \to \Pi  
$$
in the weak* topology on the space of measures on $B_1$.

We claim that $\Pi$ is a Dirac mass supported at $\mu$. By our hypotheses for all functions $F_f$ we have
$$
\Pi(F_f) = \mu (f) \,.
$$
In fact, by hypothesis we have
$$
\begin{aligned}
G_* ( \pi_n )(F_f) &= \int_1^{+\infty} \int_I     F_f( G(T,\theta)) d\pi_n(T, \theta) \\&=
\int_1^{+\infty}  \int_I     \frac{1}{T}  \int_0^T   f (g_t r_\theta x) dt  d\pi_n(T, \theta) \to   \mu (f) \,.
\end{aligned}
$$
It the follows that, for all $f \in C^0(X)$,
\begin{equation}
\label{eq:convex2}
\mu(f) = \Pi(F_f) =  \int \nu (f) d\Pi(\nu) \,.
\end{equation}
We claim that the support of $\Pi$ is contained in the closed subspace of $B_1$ of probability measures
invariant under the geodesic flow. Let us assume that it is not the case. It follows that
there exists a measure $\nu_0 \in \text{supp}(\Pi)$ which is not invariant under the geodesic
flow.  There exist a function $f_0\in C^0_0 (X)$  and a real number $\tau\not =0$ such that 
$$
\int f_0\circ g_\tau   \, d\nu_0   \not = \int f_0  \, d\nu_0   \,.
$$
Since $\nu_0\in \text{supp}(\Pi)$ and since $B_1$ is a locally convex metrizable space, there exists a closed convex neighborhood $\mathcal C \subset B_1$ 
such that $\Pi(\mathcal C) >0$ and $\Pi(\partial \mathcal C) =0$, and by continuity 
$$
\int f_0\circ g_\tau   \, d\nu    \not = \int f_0  \, d\nu_0 \,,  \quad \text{ for all } \nu \in \mathcal C\,.
$$
Let  $\nu_{\mathcal C}$ denote the measure
$$
\nu_{\mathcal C} =  \frac{1}{ \Pi(\mathcal C)}\int_{\mathcal C}  \nu d\Pi(\nu) \, \in \,  {\mathcal C} \,.
$$
We claim that $\nu_{\mathcal C}$ is invariant under the geodesic flow, a contradiction. In fact, since $\Pi$ is a weak*
limit of the family $\{G_*(\pi_n)\}$ and  $\Pi(\partial \mathcal C) =0$, we have
$$
\Pi(\mathcal C) \nu_{\mathcal C} = \lim_{n\to +\infty}    \int_{\mathcal C}  \nu dG_*(\pi_n ) (\nu) \,.
$$
Now, by construction, for all $f\in C^0_0(X)$,  we have 
$$
\begin{aligned}
 {[}\int_{\mathcal C}  &\nu dG_*(\pi_n) (\nu){]}(f\circ g_\tau -f) \\
 &=  \frac{1}{\vert I\vert} \int_{G^{-1}(\mathcal C)} \frac{1}{T}
(\int_0^T (f\circ g_\tau -f) (g_t r_\theta x) dt) d\pi_n(T,\theta) \\ &=   \frac{1}{\vert I \vert}  \int_{G^{-1}(\mathcal C)}  
\frac{1}{T} (\int_{T}^{T+\tau} f (g_t r_\theta x) dt  - \int_0^\tau f (g_t r_\theta x) dt)  d\pi_n(T,\theta)   \to 0   \,.
\end{aligned}
$$
The claim that $\nu_{\mathcal C}$ is invariant under the geodesic flow follows, and since $\nu_{\mathcal C} \in
\mathcal C$ we reached a contradiction. It follows that $\Pi$ is supported on the subspace of $g_{\R}$-invariant
measures. 

\smallskip
From formula \eqref{eq:convex2} and from the ergodicity of the measure $\mu$ with respect to the geodesic 
flow, it follows that $\Pi= \delta_\mu$ is a Dirac mass at $\mu$. We have thus proved that 
$$
\lim_{n\to +\infty}   G_* (\pi_n) =   \delta_\mu \,.
$$
From the above conclusion we immediately derive that, 
for every neighborhood $\mathcal V$ of $\mu$ in the (metric) space $B_1$,  we have
\begin{equation}
\label{eq:measure_bound_1}
\lim_{n\to +\infty}   \pi_n (\{ (T,\theta)\in [1, +\infty) \times I \vert  G(T,\theta) \not\in \mathcal V\} ) =0\,.
\end{equation}
Since $\pi_n (\{\infty\})=0$, it follows that for every $\epsilon>0$ there exists $T_\epsilon>1$ such that
$$
\pi_n (\{ (T,\theta)\in [T_\epsilon, +\infty) \times I \vert  G(T,\theta) \not\in \mathcal V\} ) < \epsilon, \quad \text{ for all }n \in\N\,.
$$
Let $(\mathcal V_k)$ be a basis of neighborhoods of $\mu$ in $B_1$ and let $(\epsilon_k)$ be any {\it summable} sequence of positive real numbers. 
There exists an increasing diverging sequence $(T_k)$ such that 
$$
\pi_n (\{ (T,\theta)\in [T_k, +\infty) \times I \vert  G(T,\theta) \not\in \mathcal V_k\} ) < \epsilon_k, \quad \text{ for all }n \in\N\,.
$$
Let then $\mathcal Z$ be the set such that
$$
{\mathcal Z} \cap [T_k, T_{k+1}) = \{ (T,\theta)\in [T_k, T_{k+1}) \times I \vert  G(T,\theta) \not\in \mathcal V_k\}\,.
$$
It is clear that by construction we have
$$
\lim_{(T,\theta)\not\in {\mathcal Z}}   G(T,\theta) = \mu\,.
$$
Finally, since $\pi_n \to \delta^I_\infty$ it follows that, for any $k\in \N$, we have
$$
\lim_{n\to+\infty} \pi_n ( [1, T_k) \times I )=0\,, 
$$
while by construction, for all $n\in \N$, we have
$$
\lim_{k\to+\infty} \pi_n \left( {\mathcal Z} \cap ([T_k, +\infty) \times I)\right) \leq \lim_{k\to+\infty}  \sum_{j\geq k} \epsilon_j =0 \,.
$$
The statement of the theorem follows.
\end{proof}

\begin{proof}[Proof of Corollary \ref{cor:BG1}]
Let $(\tau_n)$ be any sequence of probability measures on $[1, +\infty)$ which converges to the Dirac mass at the point at infinity.
Let  $\Theta_I$ denote the normalized Lebesgue measure on the interval $I\subset \T$ and let then $(\pi_n)$ be the sequence of probability measures 
on $[1, +\infty) \times I$ defines as
$$
\pi_n :=   \tau_n \times  \Theta_I  \,, \quad \text{ for all } n\in \N\,.
$$
By the hypothesis of the corollary that
$$
\mu:=\lim_{T\to +\infty} \frac{1}{T}  \int_0^T  \frac{1}{\vert I\vert} \int_I   (g_t\circ r_\theta)_* (\delta_x)d\theta dt 
$$
it follows that the hypothesis of Theorem~\ref{thm:BG} holds for the sequence $(\pi_n)$. Therefore there exists a
set $\mathcal Z$ such that   $\lim_{n\to +\infty}\pi_n ({\mathcal Z}) =0$
such that  in the weak* topology we have
$$
\lim_{(T,\theta)\not\in {\mathcal Z}}    G(T, \theta) = \lim_{(T,\theta)\not\in {\mathcal Z}}   \frac{1}{T} \int_0^T  (g_t\circ r_\theta)_* (\delta_x)dt = \mu\,.
$$

From the above conclusion we derive that, 
for every sequence of probability measure $(\tau_n)$ weakly converging to the delta mass at $+\infty\in [1, +\infty]$ and
for every neighborhood $\mathcal V$ of $\mu$ in the (metric) space $B_1$,  we have
\begin{equation}
\label{eq:Bzero_lim_measure}
\lim_{n\to +\infty}   (\tau_n \times\Theta_I)(\{ (T,\theta)\in [1, +\infty) \times I \vert  G(T,\theta) \not\in \mathcal V\} ) =0\,.
\end{equation}
We claim that for every $\mathcal V$, there exists a full measure set $\mathcal F_{\mathcal V} \subset I$ such that for all $\theta \in \mathcal F_{\mathcal V}$ we have
$$
\liminf_{\mathcal T \to +\infty}  \frac{1}{\mathcal T}  \text{Leb}( \{T\in [1, \mathcal T] \vert   G(T, \theta)\not \in \mathcal V\} )=0\,.
$$
Otherwise there exists a positive measure set $\mathcal P_\mathcal V \subset I$ such that for all $\theta\in \mathcal P_\mathcal V $
$$
\liminf_{\mathcal T \to +\infty}  \frac{1}{\mathcal T}  \text{Leb}( \{T\in [1, \mathcal T] \vert   G(T, \theta)\not \in \mathcal V\}) >0\,.
$$
By the Egorov theorem it follows that there exists a  sequence of times $(\mathcal T_n)$ and a positive measure subset 
$\mathcal P'_\mathcal V  \subset \mathcal P_\mathcal V$ such that  the sequence
$$
\inf_{{\mathcal T}\geq {\mathcal T}_n} \frac{1}{\mathcal T}  \text{Leb}( \{T\in [1, \mathcal T] \vert   G(T, \theta)\not \in \mathcal V\})
$$
converges uniformly to a continuous positive function on $\mathcal P'_\mathcal V$. It is then possible to construct a sequence $(\tau_n)$ of 
probability measures on $[1, +\infty)$, weakly converging to the delta mass at $+\infty\in [1, +\infty]$, which contradicts the conclusion in
formula \eqref{eq:Bzero_lim_measure}. Hence the above claim is proved.

Let $(\mathcal V_n)$ be a basis of neighborhoods of $\mu$ in $B_1$ and let $\mathcal F_I$ denote the full measure set defined as 
$$
\mathcal F_I := \bigcap_{n\in \N} \mathcal F_{\mathcal V_n}\,.
$$
From the above claim it follows that for all $\theta\in \mathcal F_I$, and for all $n\in \N$, we have 
$$
\liminf_{\mathcal T \to +\infty}  \frac{1}{\mathcal T}  \text{Leb}( \{T\in [1, \mathcal T] \vert   G(T, \theta)\not \in \mathcal V_n\} )=0\,.
$$
In particular, for any sequence $(\epsilon_n)$ of positive real numbers, converging to zero, we have that there exists an increasing diverging sequence
$({\mathcal T}_n) \subset [1, +\infty)$ such that 
$$
\frac{1} { {\mathcal T}_n}  \text{ Leb} \left(\{ T \in [1, {\mathcal T}_n]  \vert  G(T,\theta) \not\in \mathcal V_n \}\right) < \epsilon_n\,.
$$
Such sequence can be constructed recursively as follows. For any finite increasing sequence $\{\mathcal T_k\vert k\ \leq n\}$, and for any $\mathcal T^*_{n+1} >0$ 
there exists $\mathcal T_{n+1}\geq  \mathcal T^*_{n+1}$ such that 
$$
\frac{1} { {\mathcal T}_{n+1}}  \text{ Leb} \left(\{ T \in [1, {\mathcal T}_{n+1}]  \vert  G(T,\theta) \not\in \mathcal V_n \}\right) < \epsilon_{n+1}\,.
$$
Let $Z_\theta$ be the set defined as follows:
$$
Z_\theta:=  \cup_{n\in \N}   \{ T \in [{\mathcal T}_n,{\mathcal T}_{n+1}] :   G(T,\theta) \not\in \mathcal V_n\} \,.
$$
Let us find under what conditions $Z_\theta$ has zero lower density.  We have
$$
\begin{aligned}
\text{Leb} (Z_\theta \cap [0,\mathcal T_n]) &\leq  \sum_{k=1}^{n-1} \text{Leb} (Z_\theta \cap [{\mathcal T}_k, {\mathcal T}_{k+1}])  \leq   \sum_{k=1}^{n-1} \epsilon_{k+1} {\mathcal T}_{k+1}  \,.
\end{aligned}
$$
It is therefore enough to choose the sequences recursively so that
$$
\frac{1}{\mathcal T_n} \sum_{k=1}^{n-2} \epsilon_{k+1} \mathcal T_{k+1} + \epsilon_n  \to   0\,.
$$
It is clear by the definition of the set $Z_\theta \subset \R$ that, for $\theta \in \mathcal F_I$, we have
$$
\lim_{T\not\in Z_\theta}  G(T,\theta) = \mu\,. 
$$
The argument is therefore complete.

\end{proof}

\begin{proof}[Proof of Corollary \ref{cor:BG2}]

For all $(T, \theta)\in [1, +\infty) \times \T$, let $d(T,\theta)$ denote the distance from the measure $G(T,\theta)$ to $\mu$ with respect to any
metric which induces the weak* topology.  Let us assume by contraposition that there exists a positive measure set $\mathcal P \subset I$
such that, for all $\theta \in \mathcal P$, we have
$$
\limsup_{T\to +\infty} \, d(T,\theta)>0\,.
$$
This implies that there exists $\epsilon >0$ and a set $\mathcal P_\epsilon$ of positive Lebesgue measure such that 
for all $\theta \in \mathcal P_\epsilon$ there exists a sequence $(T_n(\theta))$ such that, for all $n\in \N$,
$$
 d(T_n(\theta),\theta)\geq \epsilon\,.
$$
By a straightforward argument, for any continuous function $f\in C^0(X)$ and for any $h\in \R$, we have
$$
\vert [G(T+h,\theta) -G(T,\theta)] (f)\vert  \leq   2 \frac{h}{T} \vert f \vert_{L^\infty}\,.
$$
It follows that there exists $\delta>0$ such that, for all $T\in [(1-\delta)T_n(\theta), (1+\delta)T_n(\theta)]$ we have
$$
d(T,\theta)\geq \epsilon/2 \,,
$$
hence it is possible to construct a sequence of compactly supported measures $\pi_n$ on $[1, +\infty) \times I$ with smooth bounded density and
conditional measure on $\T$ equal to the Lebesgue measure, such that
$$
\lim_{n\to +\infty} \pi_n \left( \{ (T,\theta) /   d(T, \theta) \geq \epsilon/2\} \right) >0\,.
$$
This contradicts the conclusion of Theorem~\ref{thm:BG}, hence the corollary is proven.

\end{proof}

\section{Oseledets regularity in almost all directions}
\label{sec:OR}

In this section we prove the Oseledets regularity result stated in Theorem \ref{thm:OG}.

\begin{proof} 
Let $\Proj^1(H)$ denote the projectivization of the irreducible bundle $H$ over the separable metric space $X$. Let $x \in X$
satisfying the hypothesis with respect to the $\SL(2, \R)$-invariant measure $\mu$ on $X$. 
Let us recall that there exists a sequence $\{\sigma_\ell\}$ of continuous function  $\sigma_\ell: \Proj^1(H) \to \R$ such that the following holds. For any $g^H_\R$-invariant probability measure $\nu$ on $\Proj^1 (H)$  which projects to the $g_\R$-invariant probability measure $\mu$ on $X$, and for any $\ell \in \N$ we have
\begin{equation}
\label{eq:exp_ineq}
\int_{\Proj^1 (H)}  \sigma_\ell (\rm v)  d\nu(v) \leq  \lambda_\mu.
\end{equation}
If, in addition, the measure $\nu$ is supported on the Oseledets subspace of the top Lyapunov exponent $\lambda_\mu$  of $g^H_\R$ with respect to the measure $\mu$ on $X$, then we have
\begin{equation}
\label{eq:exp_id}
\int_{\Proj^1 (H)}  \sigma_\ell (\rm v)  d\nu(v) =  \lambda_\mu.
\end{equation}
Finally, since the cocycle is uniformly Lipschitz, for any $x\in X$ and any ${\rm v}\in \Proj^1 (H_x)$,  uniformly with respect to $T \geq 1$ we have  
\begin{equation}
\label{eq:exp_int}
  \frac{1}{T} \log \vert g^H_T ({\rm v}) \vert_{g_T(x)}  = \lim_{\ell\to +\infty}   \frac{1}{T}  \int_0^T  \sigma_\ell( g_t^H ({\rm v}) ) \, dt  \,,
\end{equation}
The functions $\sigma_\ell : \Proj^1(H) \to \R$ can be defined as follows: for all $\ell \in \N$, for all $x\in X$ and all ${\rm v}\in \Proj^1(H_x)$, let
$$
\sigma_\ell (\rm v) :=  \ell \log \left( \frac{  \vert g^H_{1/\ell} ({\rm v})\vert_{g_{1/\ell}(x)}} {  \vert  {\rm v} \vert_x} \right)  \,.
$$
It is immediate to verify that, for all $L\in \N$, by telescopic summation  we have
$$
 \log\left( \frac{\vert g^H_L ({\rm v}) \vert_{g_L(x)}}{\vert \rm v \vert_x}\right)  =   \frac{1}{\ell}  \sum_{j=0}^{\ell L-1}   \sigma_\ell ( g^H_{j/\ell} (\rm v))\,.
$$
By the uniform Lipschitz property we also have the estimate
$$
\vert  \frac{1}{\ell L}  \sum_{j=0}^{\ell L-1}   \sigma_\ell ( g^H_{j/\ell} (\rm v)) -  \frac{1}{L} \int_0^{L}   \sigma_\ell ( g^H_t (\rm v))\, dt \vert  \leq  \frac{K}{\ell} \,.
$$
The above claims \eqref{eq:exp_ineq}, \eqref{eq:exp_id} and \eqref{eq:exp_int} follow from Birkhoff ergodic theorem and Oseledets  multiplicative ergodic theorem.

\smallskip
By a result of Bonatti, Eskin and Wilkinson (see \cite{BEW}, Theorem 1.3), under the irreducibility assumption,  any $P$-invariant probability measure $\nu$ on $\Proj^1 (H)$ 
which projects to the $\SL(2, \R)$-invariant probability measure $\mu$ on $X$ is supported on the Oseledets subspace of the top Lyapunov exponent, hence 
identity \eqref{eq:exp_id} holds. 

\smallskip
Let $x\in X$ be any point satisfying the assumption of the theorem. For any  $ {\rm v} \in \Proj^1(H_x)$ let us consider the measures $\nu_n$ on $\Proj^1(H)$ given,
for all $f \in C^0_0(\Proj^1 (H))$ by the formula
$$
\nu_n (f) :=    \int_{S_I} \frac{1}{T}  \int_0^T  f(g^H_t(r_\theta({\rm v})) dt  d\pi_n(T, \theta)  \,.
$$
Let $\nu$ be any weak* limit  point (along a subsequence) of the above family of measures.  The measure $\nu$ is $P$-invariant 
(invariant under the action of the maximal  parabolic subgroup generated by the diagonal subgroup and the unstable unipotent) and, by the hypothesis on $x\in X$,
it projects to $\mu$ under the canonical projection $\Proj^1(H) \to X$, hence identity \eqref{eq:exp_id} holds.

Let us now consider the map ${\mathcal L}: I \to B_1$ from the cylinder $S_I$ to the space of measures on the compactification $\hat \Proj^1(H)$
of the bundle $\Proj^1(H)$, given for every $f \in C^0(\hat \Proj^1(H))$ by the formula
$$
{\mathcal L}(T,\theta) (f)  := \frac{1}{T}  \int_0^T  f( g^H_t (r_\theta (\rm v)) dt \,.
$$
Let $\mathcal L_\infty$ be any weak* limit of the push-forwards ${\mathcal L}_n:= ({\mathcal L})_*(\pi_n)$ under the above maps.  By construction, for all $\ell\in \N$ we have the identity
$$
\begin{aligned}
\int  F_{\sigma_\ell} d{\mathcal L}_\infty &= \lim_{n\to +\infty}  \int  F_{\sigma_\ell}  d{\mathcal L} _n  \\ &= \lim_{n\to +\infty}  \int_{S_I}  \frac{1}{T} 
 \int_0^T  \sigma_\ell (g^{H}_t (r_\theta (\rm v)) dt  d\pi_n(T, \theta)=
\int \sigma_\ell d\nu = \lambda_\mu\,.
\end{aligned}
$$
We claim that ${\mathcal L}_\infty$ is supported on the closed subset $\mathcal C$ of $g^{H}_\R$-invariant probability measures $\nu$ on the sub-bundle  $\Proj^1 (H)$ 
such that
\begin{equation}
\label{eq:Cset}
\int_{\Proj^1 (H)}  \sigma_\ell \, d\nu = \lambda_\mu\,, \quad \text{ for all } \ell\in \N\,.
\end{equation}
In fact, by an argument similar to that of section \S \ref{sec:BG} it can be proved that ${\mathcal L}_\infty$ is supported on the set of all $g^{H}_\R$-invariant measures on $\hat \Proj^1(H)$  which project to $\mu$ under the projection $\Proj^1(H) \to X$ and, since $\lambda_\mu$ is the top Lyapunov exponent,  for all such measures $\nu$ on $\Proj^1(H)$ we have the inequalities in formula \eqref{eq:exp_ineq}, that is, 
$$
\int \sigma_\ell  d\nu \leq \lambda_\mu\,, \quad \text{ for all } \ell\in \N\,.
$$
It follows by definition  that $F_{\sigma_\ell} \leq \lambda_\mu$ on  $\text{supp}({\mathcal L}_\infty)$ and,  since, as proved above,
$$
\int  F_{\sigma_\ell} d{\mathcal L}_\infty  =\lambda_\mu \,, \quad \text{ for all } \ell\in \N\,,
$$
it follows that ${\mathcal L}_\infty$ is supported on the set $\mathcal C$, as claimed.

From the above conclusion we immediately derive that, 
for every neighborhood $\mathcal V$ of the closed subset $\mathcal C$ in the (metric) space $B_1$,  we have
\begin{equation}
\label{eq:Omeasure_bound_1}
\lim_{n\to +\infty}   \pi_n (\{ (T,\theta)\in [1, +\infty) \times I \vert   {\mathcal L}(T,\theta) \not\in \mathcal V\} ) =0\,.
\end{equation}
Since $\pi_n (\{\infty\})=0$, it follows that for every $\epsilon>0$ there exists $T_\epsilon>1$ such that
$$
\pi_n (\{ (T,\theta)\in [T_\epsilon, +\infty) \times I \vert   {\mathcal L}(T,\theta) \not\in \mathcal V\} ) < \epsilon, \quad \text{ for all }n \in\N\,.
$$
Let $(\mathcal V_k)$ be a basis of neighborhoods of $\mu$ in $B_1$ and let $(\epsilon_k)$ be any {\it summable} sequence of positive real numbers. 
There exists an increasing diverging sequence $(T_k)$ such that 
$$
\pi_n (\{ (T,\theta)\in [T_k, +\infty) \times I \vert  {\mathcal L}(T,\theta) \not\in \mathcal V_k\} ) < \epsilon_k, \quad \text{ for all }n \in\N\,.
$$
Let then $\mathcal Z$ be the set such that
$$
{\mathcal Z} \cap [T_k, T_{k+1}) = \{ (T,\theta)\in [T_k, T_{k+1}) \times I \vert   {\mathcal L}(T,\theta) \not\in \mathcal V_k\}\,.
$$
It is clear that by construction all  weak limits of the family $\{ {\mathcal L}(T,\theta) \vert  (T,\theta)\not\in {\mathcal Z}\}$
belong to the closed set $\mathcal C$ (see its definition in formula~\eqref{eq:Cset}), hence, for all $\ell \in \N$,
$$
\lim_{(T,\theta)\not\in {\mathcal Z}} \frac{1}{T} \int_0^T  \sigma_\ell (g_t^H r_\theta {\rm v}) dt    =   \lambda_\mu\,.
$$
Finally,  by the uniform approximation property stated in formula~\eqref{eq:exp_int}, we have
$$
\lim_{(T,\theta)\not\in {\mathcal Z}}   \frac{1}{T} \log \vert g^H_T ({\rm v}) \vert_{g_T(x)} =   \lambda_\mu\,.
$$
It remains to prove that the limit of the sequence $\{\pi_n(Z)\}$ is equal to zero. Since $\pi_n \to \delta^I_\infty$ it follows that, for any $k\in \N$, we have
$$
\lim_{n\to+\infty} \pi_n ( [1, T_k) \times I )=0\,, 
$$
while by construction, for all $n\in \N$, we have
$$
\lim_{k\to+\infty} \pi_n \left( {\mathcal Z} \cap ([T_k, +\infty) \times I)\right) \leq \lim_{k\to+\infty}  \sum_{j\geq k} \epsilon_j =0 \,.
$$
The statement of the theorem follows.
\end{proof}

We conclude the section by proving Corollaries  \ref{cor:OG1}  and \ref{cor:OG2}.

\begin{proof}[Proof of Corollary \ref{cor:OG1}] The argument is similar to the proof of Corollary \ref{cor:BG1}.

Let $(\tau_n)$ be any sequence of probability measures on $[1, +\infty)$ which converges to the Dirac mass at the point at infinity.
Let  $\Theta_I$ denote the normalized Lebesgue measure on the interval $I\subset \T$ and let then $(\pi_n)$ be the sequence of probability measures 
on $[1, +\infty) \times I$ defines as
$$
\pi_n :=   \tau_n \times  \Theta_I  \,, \quad \text{ for all } n\in \N\,.
$$
By the hypothesis of the corollary that
$$
\mu:=\lim_{T\to +\infty} \frac{1}{T}  \int_0^T  \frac{1}{\vert I\vert} \int_I   (g_t\circ r_\theta)_* (\delta_x)d\theta dt 
$$
it follows that the hypothesis of Theorem~\ref{thm:OG} holds for the sequence $(\pi_n)$. 

Therefore there exists a set $\mathcal Z$ such that   $\lim_{n\to +\infty}\pi_n ({\mathcal Z}) =0$
and such that, for all ${\rm v} \in H_x\setminus \{0\}$, we have
$$
\lim_{(t,\theta) \not\in {\mathcal Z}}  \frac{1}{t}  \log  \vert  g^H_t (r_\theta(\rm v) ) \vert_{g_t (r_\theta(x))}  = \lambda_\mu.
$$
From the above conclusion we derive that, 
for every sequence of probability measure $(\tau_n)$ weakly converging to the delta mass at $+\infty\in [1, +\infty]$ and
for every $\delta >0$,  we have
\begin{equation}
\label{eq:Ozero_lim_measure}
\lim_{n\to +\infty}   (\tau_n \times\Theta_I)\left(\{ (t,\theta) \vert   \frac{1}{t}  \log  \vert  g^H_t (r_\theta(\rm v) ) \vert_{g_t (r_\theta(x))}  \not\in  
(\lambda_\mu-\delta, \lambda_\mu+\delta)\} \right) =0\,.
\end{equation}
We claim that, for every $\delta>0$, there exists a full measure set $\mathcal F_{\delta} \subset I$ such that, for all $\theta \in \mathcal F_{\delta}$, we have
$$
\liminf_{T \to +\infty}  \frac{1}{T}  \text{Leb}\left( \{ t\in [1, T] \vert    \frac{1}{t}  \log  \vert  g^H_t (r_\theta(\rm v) ) \vert_{g_t (r_\theta(x))}  \not\in  
(\lambda_\mu-\delta, \lambda_\mu+\delta)\} \right)=0\,.
$$
Otherwise there exists a positive measure set $\mathcal P_\delta \subset I \subset \T$ such that, for all $\theta\in \mathcal P_\delta$,
$$
\liminf_{T \to +\infty}  \frac{1}{T}  \text{Leb}\left( \{ t\in [1, T] \vert    \frac{1}{t}  \log  \vert  g^H_t (r_\theta(\rm v) ) \vert_{g_t (r_\theta(x))}  \not\in  
(\lambda_\mu-\delta, \lambda_\mu+\delta)\} \right)>0\,.
$$
By the Egorov theorem it follows that there exists a  sequence of times $(T_n)$ and a positive measure subset 
$\mathcal P'_\delta  \subset \mathcal P_\delta \subset I$ such that  the sequence
$$
 \frac{1}{T_n}  \text{Leb}\left( \{ t\in [1, T_n] \vert    \frac{1}{t}  \log  \vert  g^H_t (r_\theta(\rm v) ) \vert_{g_t (r_\theta(x))}  \not\in  
(\lambda_\mu-\delta, \lambda_\mu+\delta)\} \right)
$$
converges uniformly to a continuous positive function on $\mathcal P'_\delta$.

 It is then possible to construct a sequence $(\tau_n)$ of 
probability measures on $[1, +\infty)$, weakly converging to the delta mass at $+\infty\in [1, +\infty]$, which contradicts the conclusion in
formula \eqref{eq:Ozero_lim_measure}.  Hence the claim is proved.

For any (fixed) sequence $(\delta_n)$ converging to zero,   let $\mathcal F_I$ denote the full measure set defined as 
$$
\mathcal F_I := \bigcap_{n\in \N} \mathcal F_{\delta_n}\,.
$$
From the above claim it follows that, for all $\theta\in \mathcal F_I$ and for all $n\in \N$, we have 
$$
\liminf_{\mathcal T \to +\infty}  \frac{1}{T}  \text{Leb}\left( \{ t\in [1, T] \vert    \frac{1}{t}  \log  \vert  g^H_t (r_\theta(\rm v) ) \vert_{g_t (r_\theta(x))}  \not\in  
(\lambda_\mu-\delta_n, \lambda_\mu+\delta_n)\} \right) =0\,.
$$
In particular, for any sequence $(\epsilon_n)$ of positive real numbers, converging to zero, we have that there exist increasing diverging sequences
$(T_n) \subset [1, +\infty)$ such that 
$$
\frac{1} { T_n}  \text{ Leb} \left(\{ t \in [1, T_n]  \vert  \frac{1}{t}  \log  \vert  g^H_t (r_\theta(\rm v) ) \vert_{g_t (r_\theta(x))}  \not\in  
(\lambda_\mu-\delta_n, \lambda_\mu+\delta_n)\} \right) < \epsilon_n\,.
$$
Such sequence can be constructed recursively as follows. For any finite increasing sequence $\{T_k\vert k\ \leq n\}$, and for any $T^*_{n+1} >0$ 
there exists $T_{n+1}\geq  T^*_{n+1}$ such that 
$$
\frac{1} { T_{n+1}}  \text{ Leb} \left(\{ t \in [1, T_{n+1}] \vert  \frac{1}{t}  \log  \vert  g^H_t (r_\theta(\rm v) ) \vert_{g_t (r_\theta(x))}  \not\in  
(\lambda_\mu-\delta_n, \lambda_\mu+\delta_n) \}\right) < \epsilon_{n+1}\,.
$$
Let $Z_\theta$ be the set defined as follows:
$$
Z_\theta:=  \cup_{n\in \N}   \{ T \in [{T}_n,{T}_{n+1}] :   \frac{1}{t}  \log  \vert  g^H_t (r_\theta(\rm v) ) \vert_{g_t (r_\theta(x))}  \not\in  
(\lambda_\mu-\delta_n, \lambda_\mu+\delta_n)\} \,.
$$
Let us find under what conditions $Z_\theta$ has zero lower density.  We have
$$
\begin{aligned}
\text{Leb} (Z_\theta \cap [0, T_n]) &\leq  \sum_{k=1}^{n-1} \text{Leb} (Z_\theta \cap [{T}_k, {T}_{k+1}])  \leq  
 \sum_{k=1}^{n-1} \epsilon_{k+1} {T}_{k+1}  \,.
\end{aligned}
$$
It is therefore enough to choose the sequences recursively so that
$$
\frac{1}{T_n} \sum_{k=1}^{n-2} \epsilon_{k+1} T_{k+1} + \epsilon_n  \to   0\,.
$$
It is clear by the definition of the set $Z_\theta \subset \R$ that, for $\theta \in \mathcal F_I$, we have
$$
\lim_{t\not\in Z_\theta}   \frac{1}{t}  \log  \vert  g^H_t (r_\theta(\rm v) ) \vert_{g_t (r_\theta(x))}  = \lambda_\mu\,. 
$$
The argument is therefore complete.
\end{proof}

\begin{proof}[Proof of Corollary \ref{cor:OG2}]
 Let us assume by contraposition that there exists a positive measure set $\mathcal P \subset I$
such that, for all $\theta \in \mathcal P$, we have
$$
\limsup_{t\to +\infty} \, \left\vert \frac{1}{t}  \log  \vert  g^H_t (r_\theta(\rm v) ) \vert_{g_t (r_\theta(x))}  - \lambda_\mu \right\vert \, > \, 0\,.
$$
This implies that there exists $\epsilon >0$ and a set $\mathcal P_\epsilon$ of positive Lebesgue measure such that 
the following holds. For all $\theta \in \mathcal P_\epsilon$ there exists a diverging sequence $(t_n)=(t_n(\theta))$ we have, for all $n\in \N$, 
$$
\left \vert \frac{1}{t_n}  \log  \vert  g^H_{t_n} (r_\theta(\rm v) ) \vert_{g_{t_n} (r_\theta(x))}  - \lambda_\mu \right\vert \geq \epsilon\,.
$$
Since the cocycle is by hypothesis uniformly Lipschitz, there exists $\delta>0$ such that, for all $t\in [(1-\delta)t_n(\theta), (1+\delta)t_n(\theta)]$ we have
$$
\left\vert \frac{1}{t}  \log  \vert  g^H_{t} (r_\theta(\rm v) ) \vert_{g_{t} (r_\theta(x))}  - \lambda_\mu \right\vert \geq \epsilon/2 \,, 
$$
hence it is possible to construct a sequence of compactly supported measures $\pi_n$ on $[1, +\infty) \times I$ with smooth bounded density and
conditional measure on $\T$ equal to the Lebesgue measure, such that
$$
\lim_{n\to +\infty} \pi_n \left( \{ (T,\theta) /  \left\vert \frac{1}{t}  \log  \vert  g^H_{t} (r_\theta(\rm v) ) \vert_{g_{t} (r_\theta(x))}  - \lambda_\mu \right\vert \geq \epsilon/2\} \right) >0\,.
$$
This contradicts the conclusion of Theorem~\ref{thm:BG}, hence the corollary is proven.

\end{proof}

\section{Limits of geodesic push-forwards of horospherical measures}
\label{sec:horospheres}

Let $X$ be a stratum of the moduli space of Abelian differentials. Let $\mathcal H_X$ denote the set of all 
compactly supported probability measures on $X$ supported on a leaf $\mathcal F^s(x)$ of the stable foliation 
of the Teichm\"uller geodesic flow such that the following properties hold:
\begin{enumerate}
\item  the measure is absolutely continuous with continuous density with respect to the canonical affine measure on $\mathcal F^s(x)$;
\item  almost all of its conditional measures along the stable Teichm\"uller horocycle are restrictions of Lebesgue 
measures along horocycle orbits.
\end{enumerate}
In particular, we may consider the restriction of the canonical  affine measure to a compact  subset of a leaf 
of the  stable foliation. 

By the results of Eskin, Mirzakhani and Mohammadi \cite{EM}, \cite{EMM}, and by condition $(2)$ above, we can deduce
that for  any horospherical probability measure $\nu \in \mathcal H_X$ there exists a unique $\SL(2, \R)$-invariant affine ergodic probability measure 
$\mu$ on $X$ such that, in the weak* topology, we have 
$$
\lim_{T\to +\infty} \frac{1}{T}  \int_0^T  (g_t)_* (\nu) dt   = \mu\,.
$$
By the argument explained in Section~\ref{sec:LGP}  we can then deduce that there exists a set $Z\subset \R$ 
of zero upper density such that, in the weak* topology, we have
\begin{equation}
\label{eq:horos_conv_Z}
\lim_{t\not\in Z} (g_t)_* (\nu)   = \mu\,.
\end{equation}
Our goal in this section is to prove Theorem~\ref{thm:horospheres}. 

Let $\Vert \cdot \Vert_{X}$ denote the Hodge norm on the tangent space $TX$ of an (affine) $\SL(2,\mathbb \R)$-invariant suborbifold  $X$
of the moduli space of Abelian differentials and let $d_X:~X\times~X\to\R$ denote the corresponding distance function. Let 
${\rm Lip}(X)$ denote the space of Lipschitz continuous functions with respect to the metric $d_X$ on $X$  endowed with the norm
$$
\Vert f \Vert_{\rm Lip} := \vert f \vert_{C^0(X)} + \sup_{x\not =y} \frac{ \vert f(x) -f(y)\vert}{d_X(x,y)} \,, \quad \text{ for all } f \in {\rm Lip}(X)\,.
$$
We recall that by Ascoli-Arzel\`a theorem, for any compact set $K\subset K$ a  ball  ${\rm Lip}(X,R)$ of radius $R>0$ in ${\rm Lip}(X)$
maps under the restriction map $R_K :C^0(X) \to C^0(K)$ into a compact subset. 

Let $\mathcal F^s$ denote the strong stable foliation of the (Teichm\"uller) geodesic flow.  For all $x \in X$, let $D^s(x,r) \subset \mathcal F^s(x)$  denote the
stable disk 
$$
D^s(x,r) :=  \{ y \in \mathcal F^s (x) \vert   d_X(x,y) \leq r\}\,.
$$
Let ${\mathcal I}^s_r: C^0(X) \to C^0(X)$ denote the averaging operator  along the stable disks with respect to the Hodge volume  $\text{vol}^s$ on stable leaves, that is, 
$$
{\mathcal I}^s_r (f) (x) :=   \frac{1}{{\rm vol} (D^s(x,r))} \int_{D^s(x,r)}   f  d{\rm vol}^s \,, \quad \text{ for all } f \in C^0(X) \,.
$$
Let $\mathcal F^{wu}$ denote the weak-unstable foliation of the geodesic flow. Let ${\rm Lip}^{wu}(X)$ denote  the space 
 of continuous functions which are Lipschitz along the weak-unstable foliation, that is,
$$
{\rm Lip}^{wu}(X):= \{ f\in C^0(X) \vert  \sup_{x\in X}\sup_{ y\in \mathcal F^{wu}(x) }    \frac{ \vert f(x) -f(y)\vert}{d_X(x,y)}  < +\infty\}\,,
$$
endowed with the norm 
$$
\Vert f \Vert_{{\rm Lip}^{wu}} := \vert f \vert_{C^0(X)}  + \sup_{x\in X}\sup_{ y\in \mathcal F^{wu}(x) }    \frac{ \vert f(x) -f(y)\vert}{d_X(x,y)} 
\,, \quad \text{ for all } f \in {\rm Lip}^{wu}(X)\,.
$$
We have the following immediate result.

\begin{lemma} 
\label{lemma:compactness}
For every $r>0$, the averaging  operator  ${\mathcal I}^s_r$  maps ${\rm Lip}^{wu}(X)$ continuously into 
${\rm Lip}(X)$, hence for any compact set $K\subset X$ the composition 
$$
{\mathcal I}^s_r \circ R_K : {\rm Lip}^{wu}(X)  \to  C^0_c(X)
$$
is a compact operator. 
\end{lemma}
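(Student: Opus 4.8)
The plan is to establish the substantive half of the statement, that ${\mathcal I}^s_r$ maps ${\rm Lip}^{wu}(X)$ boundedly into ${\rm Lip}(X)$, i.e.
$$
\Vert {\mathcal I}^s_r(f)\Vert_{\rm Lip} \,\le\, C_r\,\Vert f\Vert_{{\rm Lip}^{wu}}\,,\qquad f\in {\rm Lip}^{wu}(X)\,;
$$
the compactness assertion then follows at once from the Ascoli-Arzel\`a statement recalled just above, since ${\mathcal I}^s_r$ carries the unit ball of ${\rm Lip}^{wu}(X)$ into a ball ${\rm Lip}(X, C_r)$, which $R_K$ maps into a precompact subset of $C^0(K)$. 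Because $\vert {\mathcal I}^s_r(f)\vert_{C^0(X)}\le \vert f\vert_{C^0(X)}$ is immediate from the definition, I only need to bound the Lipschitz seminorm of ${\mathcal I}^s_r(f)$, and only for pairs $x,x'$ with $d_X(x,x')$ below a fixed threshold (for larger distances the bound follows trivially from the $C^0$ estimate).

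First I would fix such a pair and invoke the local product structure of the transverse foliations $\mathcal F^s$ (strong stable) and $\mathcal F^{wu}$ (weak unstable), which are complementary inside $TX$: there is $y\in \mathcal F^{wu}(x)$ with $x'\in \mathcal F^s(y)$ and $d_X(x,y)+d_X(y,x')\le C\,d_X(x,x')$, both displacements small. It therefore suffices to bound $\vert {\mathcal I}^s_r(f)(x)-{\mathcal I}^s_r(f)(y)\vert$ when $y\in\mathcal F^{wu}(x)$ and $\vert {\mathcal I}^s_r(f)(y)-{\mathcal I}^s_r(f)(x')\vert$ when $x'\in\mathcal F^s(y)$, in each case by $C_r\Vert f\Vert_{{\rm Lip}^{wu}}$ times the relevant distance.

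In the second case $D^s(y,r)$ and $D^s(x',r)$ lie in the same leaf $\mathcal F^s(y)$; their ${\rm vol}^s$-volumes agree up to $O_r(d_X(y,x'))$ and are bounded below by a uniform positive constant, while the symmetric difference of the two disks has ${\rm vol}^s$-measure $O_r(d_X(y,x'))$. Splitting both normalized integrals over the common part and the two small leftover pieces and using $\vert f\vert\le\vert f\vert_{C^0}$ then gives the bound — the weak-unstable Lipschitz property is not needed here. In the first case I would push $D^s(x,r)$ onto $\mathcal F^s(y)$ by the holonomy $\pi$ of $\mathcal F^{wu}$ between the two stable leaves. By the uniform regularity of these foliations at the fixed scale $r$ (they are affine in period coordinates), $\pi$ has ${\rm vol}^s$-Jacobian $1+O_r(d_X(x,y))$, moves each $z\in D^s(x,r)$ a distance $O_r(d_X(x,y))$, and $\pi(D^s(x,r))$ differs from $D^s(y,r)$ by a set of ${\rm vol}^s$-measure $O_r(d_X(x,y))$. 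The decisive point is that $z$ and $\pi(z)$ lie on one weak-unstable leaf, so $\vert f(z)-f(\pi(z))\vert\le\Vert f\Vert_{{\rm Lip}^{wu}}\,d_X(z,\pi(z))=O_r(d_X(x,y))\,\Vert f\Vert_{{\rm Lip}^{wu}}$; changing variables in $\int_{D^s(y,r)} f\,d{\rm vol}^s$ and collecting the three $O_r(d_X(x,y))\Vert f\Vert_{{\rm Lip}^{wu}}$ error terms (domain mismatch, Jacobian, integrand) together with the normalization discrepancy yields the estimate. Combining the two cases proves the displayed inequality, and the lemma follows.

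The heart of the matter is not this estimate but the geometric inputs it uses, which is exactly what the word ``immediate'' is meant to absorb: the uniform local product structure of $\mathcal F^s$ and $\mathcal F^{wu}$, the uniform two-sided bounds on ${\rm vol}^s(D^s(x,r))$ and a uniform surface-area bound for the $r$-spheres inside stable leaves, and the uniform Lipschitz control (Jacobians near $1$ at scale $r$) of the weak-unstable holonomy between stable leaves. All of these are standard consequences of the fact that the invariant foliations of the Teichm\"uller geodesic flow are affine in period coordinates, compared against the Hodge metric $d_X$; a complete write-up would simply cite them.
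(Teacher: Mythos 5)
Your proposal is correct and follows the same route as the paper: show $\mathcal I^s_r$ is a bounded operator from ${\rm Lip}^{wu}(X)$ to ${\rm Lip}(X)$, then get compactness from the Ascoli--Arzel\`a embedding of ${\rm Lip}(K)$ into $C^0(K)$ and the fact that a bounded operator composed with a compact one is compact. The only difference is that the paper dismisses the key Lipschitz bound as an ``immediate estimate,'' whereas you actually sketch it (local product structure, symmetric-difference estimate within a stable leaf, weak-unstable holonomy with Jacobian close to $1$), which is exactly the argument the paper is gesturing at, modulo the uniform geometric inputs you correctly identify as the facts one would need to cite.
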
 
\begin{proof} We can prove  by an immediate estimate  that the averaging map $ {\mathcal I}^s_r$ maps the Banach space ${\rm Lip}^{wu}(X)$  
continuously to the Banach space ${\rm Lip}(X)$, and for any compact set $K\subset X$ it maps ${\rm Lip}^{wu}(K): = {\rm Lip}^{wu}(X)\cap C^0(K)$
into ${\rm Lip}(K):= {\rm Lip}(X)\cap C^0(K) $. By Ascoli-Arzel\`a theorem, the embedding ${\rm Lip}(K)$ into $C^0(K)$  is a compact operator. Finally, 
the composition of a continuous (bounded) operator and a compact operator is a compact operator.
\end{proof}

We finally prove the convergence of push-forwards of horospherical measures.
\begin{proof}[Proof of Theorem~\ref{thm:horospheres}]
By Lemma~\ref{lemma:compactness} the operator ${\mathcal I}^s_r \circ R_K : {\rm Lip}^{wu}(X)  \to  C^0_c(X)$ is compact, hence  the dual operator 
$$( {\mathcal I}^s_r \circ R_K)^*: C^0(K)^* \to {\rm Lip}^{wu}(X)^*$$ from the space $ C^0_c(X)^*$ of linear 
continuous functionals on $C^0_c(X)$ to the space ${\rm Lip}^{wu}(X)^*$ of linear continuous functionals on ${\rm Lip}^{wu}(X)$ is also compact. In particular,
for any weakly converging sequence $(\nu_n) \subset \mathcal M(X)$ of probability measures on $X$, the sequence  $ R^*_K ({\mathcal I}^s_r)^* (\nu_n)$ 
is (strongly) convergent in ${\rm Lip}^{wu}(X)^*$.  By construction, we have that for all $t\geq 0$ the pull-back operator
$(g_{-t})^* :  {\rm Lip}^{wu}(X) \to {\rm Lip}^{wu}(X)$ is a weak contraction, in the sense that
$$
\Vert  f \circ g_{-t}  \Vert_{{\rm Lip}^{wu}} \leq   \Vert  f  \Vert_{{\rm Lip}^{wu}} \,, \quad \text{ for all }  f \in {\rm Lip}^{wu}(X)\,.
$$ 
hence the dual operator $(g_t)_* : {\rm Lip}^{wu}(X)^* \to {\rm Lip}^{wu}(X)^*$ defined as
$$
(g_t)_* (\nu) (f) :=  \nu ( f \circ g_{-t}) \,, \quad \text{ for all } \nu \in {\rm Lip}^{wu}(X)^* \text{ and for all } f\in {\rm Lip}^{wu}(X)
$$
is also a weak contraction with respect to the dual norm $\Vert \cdot \Vert^*_{{\rm Lip}^{wu}}$ on ${\rm Lip}^{wu}(X)^*$:
$$
\Vert  (g_t)_*(\nu) \Vert^*_{{\rm Lip}^{wu}} \leq   \Vert  \nu  \Vert^*_{{\rm Lip}^{wu}} \,, \quad \text{ for all }  \nu \in {\rm Lip}^{wu}(X)^*\,.
$$
Let $\nu$ be any horospherical measure supported on the stable leaf $\mathcal F^s(x)$ at a point $x\in X$. Let $\mu$ denote the
unique affine probability measure supported on the orbit closure $\overline{\SL(2,\R) x}$. As we have remarked above, see formula
\eqref{eq:horos_conv_Z}, there exists a sequence $(t_n)$ such that 
$$
(g_{t_n})_*(\nu) \to \mu   \quad \text{ in the weak* topology}\,.
$$
Since $\nu$ is a {\it horospherical measure}, it follows that 
$$
\lim_{t\to +\infty} \Vert  ({\mathcal I}^s_r)^* (g_{t_n})_*(\nu) - (g_{t_n})_*(\nu) \Vert^*_{{\rm Lip}^{wu}}   =0\,,
$$
hence, for any compact set $K \subset X$, we have
$$
\lim_{n\to +\infty} \Vert   R^*_K  (g_{t_n})_* (\nu) - R^*_K (\mu) \Vert^*_{{\rm Lip}^{wu}}  = 0\,.
$$
Since $(g_t)_*$ is a weak contraction on ${\rm Lip}^{wu}(X)^*$, uniformly with respect to $t\geq 0$, we have that
$$
\lim_{n\to +\infty} \Vert  (g_t)_*  R^*_K (g_{t_n})_* (\nu) -  R^*_K (\mu)\Vert_ {{\rm Lip}^{wu}}  = 0\,.
$$
Finally, since the set of all probability measures supported on horocycle arcs is {\it tight} (see \cite{MW}, \cite{EMa})
for any $\epsilon>0$ there exists a compact set $K_\epsilon$ such that
$$
\Vert   (g_t)_*  R^*_{K_\epsilon} (g_{t_n})_* (\nu) - (g_{t+t_n})_* (\nu)\Vert_{\mathcal M(X)} =
\Vert  R^*_{K_\epsilon} (g_{t_n})_* (\nu) - (g_{t_n})_* (\nu)\Vert_{\mathcal M(X)}  \leq \epsilon \,,
$$
hence the measure $\mu$ is the unique weak* limit of the set  $\{(g_{t+t_n})_* (\nu)\}$, as claimed. 

\end{proof}

 \end{document}